\documentclass[reqno]{amsart}
\usepackage{amssymb,latexsym,amsmath,amsthm,enumerate,amsbsy}
\usepackage[mathscr]{eucal}
\usepackage{framed,color,graphicx}
\usepackage{mathrsfs}
\usepackage[all]{xy}
\usepackage{tikz}
\usepackage{cite}

\makeatletter
\@namedef{subjclassname@2020}{\textup{2020} Mathematics Subject Classification}
\makeatother

\usetikzlibrary{positioning,decorations.pathreplacing,patterns,decorations.pathmorphing}
\tikzset{%
element/.style={draw, shape=circle, fill=white, inner sep=1.4pt}
}

\DeclareSymbolFont{bbold}{U}{bbold}{m}{n}
\DeclareSymbolFontAlphabet{\mathbbold}{bbold}

\theoremstyle{plain}
\newtheorem{thm}{Theorem}[section]
\newtheorem{lem}[thm]{Lemma}
\newtheorem{cor}[thm]{Corollary}
\newtheorem{pro}[thm]{Proposition}

\theoremstyle{definition}

\newtheorem{remark}[thm]{Remark}

\newcommand{\up}[1]{\textup{#1}}

\newcommand{\bp}{\mathbf{p}}
\newcommand{\bq}{\mathbf{q}}
\newcommand{\br}{\mathbf{r}}

\newcommand{\bt}{\mathbf{t}}
\newcommand{\bu}{\mathbf{u}}
\newcommand{\bv}{\mathbf{v}}
\newcommand{\bw}{\mathbf{w}}

\begin{document}

\title[A new limit variety of additively idempotent semirings]
{A new limit variety of additively idempotent semirings}

\author{Simin Lyu}
\address{School of Mathematics, Northwest University, Xi'an, 710127, Shaanxi, P.R. China}
\email{siminlyu@yeah.net}

\author{Miaomiao Ren}
\address{School of Mathematics, Northwest University, Xi'an, 710127, Shaanxi, P.R. China}
\email{miaomiaoren@yeah.net}

\author{Mengya Yue}
\address{School of Mathematics, Northwest University, Xi'an, 710127, Shaanxi, P.R. China}
\email{myayue@yeah.net}

\subjclass[2020]{16Y60, 03C05, 08B05, 08B15}
\keywords{semiring, finitely based problem, limit variety, subvariety lattice}


\begin{abstract}
We establish a sufficient condition for an additively idempotent semiring to be nonfinitely based.
Applying this condition,
we prove that the six-element additively idempotent semiring $SR_6$ has no finite basis for its identity.
Furthermore, we provide a complete description of the subvariety lattice of the variety $\mathsf{V}(SR_6)$ generated by $SR_6$,
showing that it forms a four-element chain.
Our results demonstrate that $\mathsf{V}(SR_6)$ is a limit variety:
it is itself nonfinitely based, yet all of its proper subvarieties are finitely based.
Moreover, $SR_6$ is the smallest known example of an additively idempotent semiring generating a limit variety.
\end{abstract}

\maketitle

\section{Introduction}
A \emph{variety} is a class of algebras that is closed under taking subalgebras, homomorphic images, and arbitrary direct products.
Birkhoff's celebrated theorem states that a class of algebras is a variety if and only if it is an equational class, that is,
the class of all algebras that satisfy a certain set of identities.
A variety is \emph{finitely based} if it has a finite equational basis (that is, it can be defined by a finite set of identities);
otherwise, it is \emph{nonfinitely based}.
An algebra $A$ is finitely based (resp., nonfinitely based)
if the variety $\mathsf{V}(A)$ it generates is finitely based (resp., nonfinitely based).

The \emph{finite basis problem} for a class of algebras
concerns the classification of its members according to whether they are finitely based.
This problem has been one of the central themes in universal algebra,
with deep connections to the structure of subvariety lattices and the complexity of equational theories.
It has been intensively studied for various algebraic structures, including groups, rings, semigroups, and semirings,
see, for example, \cite{aj, gk, np, vol01} and the references therein.

A variety is \textit{hereditarily finitely based} if all of its subvarieties are finitely based.
A variety is a \textit{limit variety} if it is itself nonfinitely based, but each of its proper subvarieties is finitely based.
In other words, limit varieties are precisely minimal nonfinitely based varieties.
By Zorn's lemma, every nonfinitely based variety contains a limit variety.
Consequently, a variety is hereditarily finitely based if and only if it contains no limit subvarieties.
Therefore, the problem of classifying hereditarily finitely based varieties essentially reduces to the classification of limit varieties.
As Kharlampovich and Sapir~\cite{ks} aptly observed,
the problem of finding limit varieties is interesting in itself
as is every problem about boundaries between ``Yes'' and ``No''.

Lee and Volkov~\cite{lv} noted that there is a number of challenges in classifying limit varieties,
and that even describing a single example of a limit variety can be difficult.
To date, no explicit example of a limit variety of groups has been found.
Within semigroups, however, several explicit examples are known (see~\cite{gulz, gus, j, zl}).
The aim of this paper is to present a new example of a limit variety of additively idempotent semirings.

By an \emph{additively idempotent semiring} (or ai-semiring for short) we mean
an algebra $(S, +, \cdot)$ such that the additive reduct $(S, +)$ forms a commutative idempotent semigroup,
the multiplicative reduct $(S, \cdot)$ forms a semigroup, and the distributive laws
\[
x(y+z)\approx xy+xz,\quad (x+y)z\approx xz+yz
\]
hold.
A \emph{commutative ai-semiring} is an ai-semiring whose multiplicative reduct is commutative.
Additively idempotent semirings arise naturally in various areas of mathematics and have found important applications in fields such as algebraic geometry \cite{cc}, tropical geometry \cite{ms}, information science \cite{gl} and theoretical computer science \cite{go}.

On an ai-semiring $S$, one defines a binary relation $\leq$ by
\[
a \leq b \Leftrightarrow a + b = b.
\]
This relation is a partial order and is compatible with both addition and multiplication;
hence ai-semirings are sometimes called \emph{semilattice-ordered semigroups}.
Unless otherwise specified, all references to an order on an ai-semiring are to this one.
We write $a< b$ to indicate that $a\leq b$ and $a\neq b$.

Over the past two decades, the finite basis problem for ai-semirings has been a subject of intensive investigation, attracting considerable attention from researchers worldwide
(see~\cite{d, gv, pas05, shap23} and the references therein).
In particular, the past few years have witnessed significant progress in the study of limit varieties of ai-semirings.

Ren, Jackson, Zhao, and Lei~\cite{rjzl} provided the first explicit examples of limit varieties of ai-semirings.
They presented concrete constructions for an infinite family of such varieties, each generated by a finite flat semiring
(for a more extensive account of flat semirings, see Jackson~\cite{jack08}).
This family arises from a complete characterization of limit varieties generated by flat extensions of finite groups.
In addition, they gave an ad hoc example of a limit variety also generated by a finite flat semiring.
Beyond these, they demonstrated the existence of further examples,
including a continuum-sized family of limit varieties with no finite generator, and two additional ad hoc examples.

More recently, Gao and Ren~\cite{gr} provided another explicit example of a limit variety of ai-semirings,
namely the variety generated by all acyclic graph semirings.

In this paper, we contribute a new example to this growing family of limit varieties of ai-semirings.
The remainder of this paper is organised as follows.
Section~2 collects the necessary preliminaries on commutative ai-semirings,
including the key notions of terms, identities, and flat semirings,
as well as the technical tools that will be used throughout the paper.
In Section~3 we establish our main result, Theorem~\ref{free02},
which provides a sufficient condition for a commutative ai-semiring to be nonfinitely based.
Section~4 is devoted to applications of this criterion.
In Section~5 we describe the subvariety lattice of the variety generated by our main example,
showing that it forms a four-element chain.
We conclude in Section~6 with a brief discussion of some open problems and possible directions for further investigation.

In particular, we show that the six-element ai-semiring $SR_6$,
whose Cayley tables are given in Table~\ref{6-element ai-semirings},
generates a limit variety: $\mathsf{V}(SR_6)$ itself is nonfinitely based,
while every proper subvariety of $\mathsf{V}(SR_6)$ is finitely based. To the best of our knowledge,
$SR_6$ is the smallest known example of an ai-semiring that generates a limit variety,
improving upon the previously known eight-element example. Despite the apparent simplicity of its operations,
in particular the fact that its multiplicative reduct is both commutative and $3$-nilpotent,
$SR_6$ exhibits a surprisingly rich equational structure.

\begin{table}[htbp]
\caption{The Cayley tables of $SR_6$}\label{6-element ai-semirings}
\begin{tabular}{c|cccccc}
$+$               &1 & 2 & 3 & 4 & 5 & 6\\
\hline
                 1& 1 & 1 & 1 & 1 & 1 & 1 \\
                 2&1 & 2 & 1 & 1 & 2 & 1\\
                 3&1 & 1 & 3 & 1 & 1 & 1\\
                 4&1 & 1 & 1 & 4 & 1 & 4\\
                 5&1 & 2 & 1 & 1 & 5 & 1\\
                 6&1 & 1 & 1 & 4 & 1 & 6
\end{tabular}
\qquad\qquad
\begin{tabular}{c|cccccc}
$\cdot$               &1 & 2 & 3 & 4 & 5 & 6\\
\hline
                   1&1 & 1 & 1 & 1 & 1 & 1 \\
                   2&1 & 1 & 1 & 1 & 1 & 3\\
                   3&1 & 1 & 1 & 1 & 1 & 1\\
                   4&1 & 1 & 1 & 1 & 3 & 1\\
                   5&1 & 1 & 1 & 3 & 1 & 3\\
                   6&1 & 3 & 1 & 1 & 3 & 1
\end{tabular}
\end{table}

\section{Preliminaries}
In this section, we introduce the basic terminology, notation, and preliminary results that will be used throughout the paper.
Since our main object of study, $SR_6$, is a commutative ai-semiring,
we focus primarily on notions and facts related to commutative ai-semirings.
Much of the material presented here is drawn from the preliminary section of \cite{yrg}.

Let $X$ be a countably infinite set of variables, and let $X_c^+$ denote the free commutative semigroup over $X$.
A \textit{commutative ai-semiring term} (or simply a \textit{term}) over $X$ is defined as a finite nonempty set of words in $X_c^+$.
(In the sequel, terms are denoted by bold lowercase letters $\mathbf{u}, \mathbf{v}, \mathbf{w}, \dots$,
while ordinary lowercase letters $x, y, z, \dots$ represent variables.)
A term is represented as a formal sum of its elements. Specifically, $\mathbf{w} = \mathbf{u}_1 + \mathbf{u}_2 + \cdots + \mathbf{u}_n$ indicates that $\mathbf{w} = \{ \mathbf{u}_1, \mathbf{u}_2, \dots, \mathbf{u}_n \}$. The order of the summands in the formal sum is irrelevant, and multiple occurrences of the same word are collapsed into a single occurrence. Two terms are equal if and only if their underlying sets coincide.

Let $P_f(X^+_c)$ denote the collection of all terms over $X$.
This set forms a commutative ai-semiring under the usual operations of term addition and multiplication.
It follows from ~\cite[Theorem 2.5]{ku} that $P_f(X^+_c)$ is the free commutative ai-semiring over $X$ in the variety of all commutative ai-semirings.
A \textit{commutative ai-semiring substitution} (or just a \textit{substitution})
is defined as a semiring homomorphism from $P_f(X^+_c)$ to itself.

Let $\mathbf{u}$ and $\mathbf{v}$ be terms.
We say that $\mathbf{u}$ is a \emph{subterm of $\mathbf{v}$} if there exist terms $\mathbf{p}_1$ and $\mathbf{p}_2$ such that
\[
\mathbf{v} = \mathbf{p}_1 \mathbf{u} + \mathbf{p}_2.
\]
Here $\mathbf{p}_1$ may be the empty word (acting as the multiplicative identity),
and $\mathbf{p}_2$ may be the empty set (acting as the additive identity).
The term $\mathbf{v}$ is called $\mathbf{u}$-\emph{free} if for every substitution $\varphi$, the term $\varphi(\mathbf{u})$ is not a subterm of $\mathbf{v}$.

\begin{lem}\label{nlemma0}
Let $\mathbf{u}, \mathbf{v}$ and $\mathbf{w}$ be terms. If $\mathbf{v}$ is $\mathbf{u}$-free and $\mathbf{u}$ is a subterm of $\mathbf{w}$, then $\mathbf{v}$ is also $\mathbf{w}$-free.
\end{lem}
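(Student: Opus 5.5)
The plan is to argue by contraposition: assume $\mathbf{v}$ is not $\mathbf{w}$-free and deduce that $\mathbf{v}$ is not $\mathbf{u}$-free. So suppose there is a substitution $\varphi$ such that $\varphi(\mathbf{w})$ is a subterm of $\mathbf{v}$. Then there are terms $\mathbf{q}_1,\mathbf{q}_2$ (with $\mathbf{q}_1$ possibly the empty word and $\mathbf{q}_2$ possibly empty) such that $\mathbf{v}=\mathbf{q}_1\varphi(\mathbf{w})+\mathbf{q}_2$.

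Since $\mathbf{u}$ is a subterm of $\mathbf{w}$, we can write $\mathbf{w}=\mathbf{p}_1\mathbf{u}+\mathbf{p}_2$. Because $\varphi$ is a semiring homomorphism of $P_f(X^+_c)$, we get $\varphi(\mathbf{w})=\varphi(\mathbf{p}_1)\varphi(\mathbf{u})+\varphi(\mathbf{p}_2)$. Here we use the conventions that $\varphi$ of the empty word is the empty word and $\varphi$ of the empty sum is empty, so these degenerate cases are harmless. Substituting this into the expression for $\mathbf{v}$ and using distributivity and commutativity gives
\[
\mathbf{v}=\bigl(\mathbf{q}_1\varphi(\mathbf{p}_1)\bigr)\varphi(\mathbf{u})+\bigl(\mathbf{q}_1\varphi(\mathbf{p}_2)+\mathbf{q}_2\bigr).
\]
This exhibits $\varphi(\mathbf{u})$ as a subterm of $\mathbf{v}$, so $\mathbf{v}$ is not $\mathbf{u}$-free, which is the desired contradiction.

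In effect, the subterm relation is transitive and is preserved under substitutions. The only step needing care is the bookkeeping for the degenerate cases. When $\mathbf{p}_1$ or $\mathbf{q}_1$ is the empty word, the product $\mathbf{q}_1\varphi(\mathbf{p}_1)$ is again either a term or the empty word, so it is a legitimate multiplier in the definition of subterm. When $\mathbf{p}_2$ or $\mathbf{q}_2$ is empty, the summand $\mathbf{q}_1\varphi(\mathbf{p}_2)+\mathbf{q}_2$ is either a term or empty, so it is a legitimate additive part. We handle this by adjoining formal identities, i.e. working in the monoid and semiring obtained by adjoining $1$ and $0$, where these cases are automatic.
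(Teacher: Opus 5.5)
Your proof is correct and takes the same approach as the paper, which says only that the lemma follows directly from the definitions of subterm and freeness. Your argument spells that out: substituting $\varphi(\mathbf{w})=\varphi(\mathbf{p}_1)\varphi(\mathbf{u})+\varphi(\mathbf{p}_2)$ into $\mathbf{v}=\mathbf{q}_1\varphi(\mathbf{w})+\mathbf{q}_2$ exhibits $\varphi(\mathbf{u})$ as a subterm of $\mathbf{v}$, and your handling of the empty-word and empty-set conventions is sound.
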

\begin{proof}
This follows directly from the definitions of subterm and freeness.
\end{proof}

A \emph{commutative ai-semiring identity} (or simply an \emph{identity})
is a formal expression of the form $\mathbf{u} \approx \mathbf{v}$, where $\mathbf{u}$ and $\mathbf{v}$ are terms.
Let $S$ be a commutative ai-semiring and $\mathbf{u} \approx \mathbf{v}$ an identity.
We say that $S$ \emph{satisfies $\mathbf{u} \approx \mathbf{v}$}, or that $\mathbf{u} \approx \mathbf{v}$ \emph{holds in $S$}, if $\varphi(\mathbf{u}) = \varphi(\mathbf{v})$ for every semiring homomorphism $\varphi: P_f(X^+_c) \to S$.

The following result, which is a consequence of \cite[Lemma 3.1]{d1}, concerns the equational logic of commutative ai-semirings.

\begin{lem}\label{nlemma1}
Let $\Sigma$ be a set of identities and let $\mathbf{u} \approx \mathbf{v}$ be a nontrivial identity.
Then $\mathbf{u} \approx \mathbf{v}$ is derivable from $\Sigma$ if and only if there exist terms
$\mathbf{t}_1, \mathbf{t}_2, \ldots, \mathbf{t}_n \in P_f(X^+_c)$ such that $\mathbf{u} = \mathbf{t}_1$,
$\mathbf{v} = \mathbf{t}_n$ and, for each $1 \leq i < n$, there are terms
$\mathbf{p}_i, \mathbf{r}_i, \mathbf{s}_i, \mathbf{s}'_i \in P_f(X^+_c)$ and a substitution
$\varphi_i\colon P_f(X^+_c) \to P_f(X^+_c)$ such that
\[
        \mathbf{t}_i = \mathbf{p}_i \varphi_i(\mathbf{s}_i)  + \mathbf{r}_i, \quad
        \mathbf{t}_{i+1} = \mathbf{p}_i \varphi_i(\mathbf{s}'_i) + \mathbf{r}_i,
\]
where $\mathbf{s}_i \approx \mathbf{s}'_i \in \Sigma$ or $\mathbf{s}'_i \approx \mathbf{s}_i \in \Sigma$,
$\mathbf{p}_i$ may be the empty word (acting as the multiplicative identity),
and $\mathbf{r}_i$ may be the empty set (acting as the additive identity).
\end{lem}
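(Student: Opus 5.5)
This is the standard completeness theorem for equational logic, specialised to the free commutative ai-semiring $P_f(X^+_c)$. The plan is to show that the relation $\theta_\Sigma$ on $P_f(X^+_c)$, defined by $\mathbf{u}\,\theta_\Sigma\,\mathbf{v}$ iff $\mathbf{u}=\mathbf{v}$ or there is a chain $\mathbf{t}_1,\dots,\mathbf{t}_n$ as in the statement, is a fully invariant congruence. Birkhoff's theorem then identifies it with the set of consequences of $\Sigma$.

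\textbf{Sufficiency.} Suppose such a chain exists. Each single step $\mathbf{t}_i\to\mathbf{t}_{i+1}$ is derivable from $\Sigma$. Indeed, $\varphi_i(\mathbf{s}_i)\approx\varphi_i(\mathbf{s}'_i)$ follows by substitution. Multiplying by $\mathbf{p}_i$ and adding $\mathbf{r}_i$ are compatible with derivability, since derivable identities form a congruence. Transitivity then gives $\mathbf{u}\approx\mathbf{v}$.

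\textbf{Necessity.} It suffices to check that $\theta_\Sigma$ has the following properties.
\begin{enumerate}[(i)]
\item It is an equivalence relation. Chains can be reversed because each step is symmetric: $\Sigma$ is used in either orientation. Chains can also be concatenated.
\item It contains every pair $(\varphi(\mathbf{s}),\varphi(\mathbf{s}'))$ with $\mathbf{s}\approx\mathbf{s}'\in\Sigma$. This is a one-step chain with empty $\mathbf{p}$ and empty $\mathbf{r}$.
\item It is closed under substitutions $\psi$. Apply $\psi$ to every term of a chain; $\psi(\mathbf{p}_i)$ and $\psi(\mathbf{r}_i)$ remain of the required form, and $\psi\circ\varphi_i$ is again a substitution.
\item It is compatible with multiplication by a term $\mathbf{q}$. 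Multiply each step by $\mathbf{q}$. Distributivity gives $\mathbf{q}\mathbf{t}_i=(\mathbf{q}\mathbf{p}_i)\varphi_i(\mathbf{s}_i)+\mathbf{q}\mathbf{r}_i$, which is again of the right shape; the empty-word cases cause no trouble.
\item It is compatible with addition of a term $\mathbf{q}$. Replace $\mathbf{r}_i$ by $\mathbf{r}_i+\mathbf{q}$, where $\mathbf{r}_i+\mathbf{q}=\mathbf{q}$ if $\mathbf{r}_i$ is empty.
\end{enumerate}
Compatibility with both arguments, together with transitivity, gives congruence.

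Hence $P_f(X^+_c)/\theta_\Sigma$ is a commutative ai-semiring satisfying $\Sigma$. Any assignment into it lifts to a substitution, and by (iii) it respects $\theta_\Sigma$. So if $\Sigma\vdash\mathbf{u}\approx\mathbf{v}$, then the quotient satisfies $\mathbf{u}\approx\mathbf{v}$. Evaluating under the canonical map $x\mapsto x/\theta_\Sigma$ gives $\mathbf{u}\,\theta_\Sigma\,\mathbf{v}$. Since the identity is nontrivial, $\mathbf{u}\neq\mathbf{v}$, so a genuine chain with $n\ge2$ exists.

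\textbf{Main obstacle.} The only delicate point is the bookkeeping in (iv) and (v). One must ensure the new $\mathbf{p}$ and $\mathbf{r}$ stay within the allowed forms: empty word versus actual term, empty set versus actual term. This works because terms are sets of words in the free commutative semigroup, so multiplication by a nonempty term distributes over the set union.

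Alternatively, one can simply cite \cite[Lemma 3.1]{d1} for ai-semirings and restrict to the commutative case. This requires adding the commutativity identity to $\Sigma$, or working directly in $X_c^+$ as above.
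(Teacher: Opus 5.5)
Your argument is correct, but it does not follow the paper's route. The paper gives no argument of its own: it simply records the lemma as a consequence of \cite[Lemma 3.1]{d1}, a derivation lemma for ai-semirings in general.

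Getting from that general lemma to the commutative statement needs a small transfer step that the paper leaves implicit. One adds $xy\approx yx$ to $\Sigma$ and takes a derivation chain in the free ai-semiring. That chain is pushed through the natural projection onto $P_f(X^+_c)$: substitutions project to substitutions, and word contexts collapse to a single multiplier $\mathbf{p}_i$. Finally, one discards the steps that used commutativity, since they become trivial.

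You instead prove the characterization from scratch inside $P_f(X^+_c)$. You show that the chain relation $\theta_\Sigma$ is a congruence containing all substitution instances of $\Sigma$, and then use freeness and soundness. Your proof is self-contained and works natively in the commutative setting with no translation. It also makes clear why $\mathbf{p}_i$ may be an arbitrary term rather than just a word. The citation route is shorter, but it relies on the literature plus the unstated transfer step. Your closing remark about citing \cite[Lemma 3.1]{d1} and restricting to the commutative case is essentially the paper's route.
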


The following lemma, which is the compactness theorem of equational logic (see~\cite[Exercise II.14.10]{bs}),
will be our main tool in Section~3 for establishing that a commutative ai-semiring has no finite equational basis.

\begin{lem}\label{nlemma2}
Let \(\mathcal{V}\) be a commutative ai-semiring variety. If \(\mathcal{V}\) is finitely based,
then every equational basis of \(\mathcal{V}\) contains a finite subset that also defines \(\mathcal{V}\).
Equivalently, if \(\mathcal{V}\) has an infinite equational basis none of whose finite subsets defines \(\mathcal{V}\),
then \(\mathcal{V}\) is nonfinitely based.
\end{lem}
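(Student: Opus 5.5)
The statement to prove is Lemma~\ref{nlemma2}, the compactness theorem of equational logic specialised to commutative ai-semirings. The argument uses only the finiteness of derivations, as made explicit in Lemma~\ref{nlemma1}. The second formulation is the contrapositive of the first, so it suffices to prove the first.

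Suppose $\mathcal{V}$ is finitely based, say defined by a finite set $\Sigma_0$ of identities. Let $\Sigma$ be an arbitrary equational basis of $\mathcal{V}$. Each identity $\sigma \in \Sigma_0$ holds in $\mathcal{V}$, and $\Sigma$ defines $\mathcal{V}$. By Birkhoff's completeness theorem, $\sigma$ is therefore derivable from $\Sigma$ together with the axioms of commutative ai-semirings; Lemma~\ref{nlemma1} already incorporates those axioms, since we work in $P_f(X^+_c)$.

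If $\sigma$ is trivial, we need nothing from $\Sigma$. Otherwise Lemma~\ref{nlemma1} supplies a finite chain $\mathbf{t}_1,\dots,\mathbf{t}_n$ witnessing the derivation. Each elementary step of this chain uses a single identity $\mathbf{s}_i \approx \mathbf{s}'_i$ taken from $\Sigma$ (or its reverse). Let $\Sigma_\sigma \subseteq \Sigma$ be the set of these at most $n-1$ identities. The same chain then witnesses, again by Lemma~\ref{nlemma1}, that $\sigma$ is derivable from $\Sigma_\sigma$.

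Now put $\Sigma' = \bigcup_{\sigma \in \Sigma_0} \Sigma_\sigma$. This is a finite union of finite sets, hence a finite subset of $\Sigma$. We check that $\Sigma'$ defines $\mathcal{V}$ by comparing models.
\begin{itemize}
\item Every algebra in $\mathcal{V}$ satisfies $\Sigma' \subseteq \Sigma$.
\item Conversely, every commutative ai-semiring satisfying $\Sigma'$ satisfies each $\sigma \in \Sigma_0$, by soundness of derivations. Since $\Sigma_0$ defines $\mathcal{V}$, such an algebra lies in $\mathcal{V}$.
\end{itemize}
Hence $\Sigma'$ is a finite subset of $\Sigma$ that defines $\mathcal{V}$.

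There is no real obstacle here. The only point needing care is that ``defines'' is interpreted within commutative ai-semirings, which is the convention implicit in Lemma~\ref{nlemma1}. One must also note that soundness, i.e.\ that derivable identities hold in every model of $\Sigma'$, is immediate from the form of the steps in Lemma~\ref{nlemma1}: each step replaces a substitution instance of an identity in $\Sigma'$ inside a context $\mathbf{p}_i(\,\cdot\,)+\mathbf{r}_i$, and such a replacement preserves values in any model of $\Sigma'$.
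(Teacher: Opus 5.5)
Your argument is correct and is essentially the paper's route. The paper gives no proof of its own: it cites the compactness theorem of equational logic (Burris and Sankappanavar, Exercise II.14.10), and your argument is the standard one behind that citation — each identity of the finite basis $\Sigma_0$ has a finite derivation from $\Sigma$ (made explicit here through Lemma~\ref{nlemma1}), so only finitely many members of $\Sigma$ are needed. Your handling of trivial identities, the contrapositive formulation, and soundness of the replacement steps is all in order.
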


We denote by $\mathbf{u} \preceq \mathbf{v}$ (or equivalently $\mathbf{v} \succeq \mathbf{u}$)
the identity $\mathbf{v}\approx \mathbf{v} + \mathbf{u}$, which we refer to as a \textit{commutative ai-semiring inequality} (or simply an \textit{inequality}).
It is routine to verify that a commutative ai-semiring $S$ satisfies the inequality $\mathbf{u} \preceq \mathbf{v}$ if and only if for every semiring homomorphism $\varphi: P_f(X^+_c) \to S$, we have $\varphi(\mathbf{u}) \leq \varphi(\mathbf{v})$.
Consequently, $S$ satisfies an identity $\mathbf{u} \approx \mathbf{v}$ precisely when it satisfies
both inequalities $\mathbf{u} \preceq \mathbf{v}$ and $\mathbf{v} \preceq \mathbf{u}$.
Therefore, $\mathbf{u} \approx \mathbf{v}$ is equivalent to
the inequalities $\mathbf{u} \preceq \mathbf{v}$ and $\mathbf{v} \preceq \mathbf{u}$.
For this reason, when dealing with a set of identities,
we may always assume without loss of generality that it consists of inequalities.
The following discussion shows that it suffices to consider inequalities of a special form.

Now let $\Sigma$ be a set of identities, and let $\mathbf{u} \approx \mathbf{v}$ be an identity such that
\[
\mathbf{u} = \mathbf{u}_1 + \cdots + \mathbf{u}_k, \quad
\mathbf{v} = \mathbf{v}_1 + \cdots + \mathbf{v}_\ell,
\]
where $\mathbf{u}_i, \mathbf{v}_j \in X^+_c$ for $1 \leq i \leq k$ and $1 \leq j \leq \ell$.
One readily checks that the ai-semiring variety defined by $\mathbf{u} \approx \mathbf{v}$ coincides with the ai-semiring variety defined by the inequalities
\[
\mathbf{u}_i \preceq \mathbf{v}, \quad \mathbf{v}_j \preceq \mathbf{u} \quad (1 \leq i \leq k, \, 1 \leq j \leq \ell).
\]
Consequently, to prove that $\mathbf{u} \approx \mathbf{v}$ is derivable from $\Sigma$, it suffices to show that for every $i$ and $j$, the inequalities $\mathbf{u}_i \preceq \mathbf{v}, \, \mathbf{v}_j \preceq \mathbf{u}$ can be derived from $\Sigma$.
In view of this, we always restrict our attention to the inequalities of the form
$\mathbf{q} \preceq \mathbf{u}$, where $\mathbf{q}$ is a word and $\mathbf{u}$ is a term.

To end this section, we introduce an important class of commutative ai-semirings.
For a nonempty subset $W$ of $X_c^+$, let
$S_c(W)$ denote the set consisting of all nonempty subwords of words in $W$ together with a new symbol $0$.
Define operations $+$ and $\cdot$ on $S_c(W)$ by the rule
\[
\bu + \bv =
\begin{cases}
\bu & \text{if } \bu=\bv, \\
0 & \text{otherwise,}
\end{cases}
\qquad
\bu \cdot \bv =
\begin{cases}
\bu\bv & \text{if } \bu\bv \in S_c(W) \setminus \{0\}, \\
0 & \text{otherwise.}
\end{cases}
\]
Then $S_c(W)$ forms a commutative ai-semiring,
where $0$ serves as both the additive top element and the multiplicative zero.
(Such algebras are known as \emph{flat semirings}; see~\cite{jrz, jack08}.)
If $W$ consists of a single word $\bw$, we write $S_c(\bw)$ for $S_c(W)$.

Ren et al.~\cite[Theorem 3.6]{rjzl} showed that the variety $\mathsf{V}(S_c(abc))$ is a limit variety.
Prior to this work, the eight-element algebra $S_c(abc)$ was the smallest known example of an ai-semiring generating a limit variety.

\section{A sufficient condition for the nonfinite basis property}
In this section, we establish a sufficient condition for an ai-semiring to be nonfinitely based.

Let \( n \geq 1 \) be an integer. Define
\[
{\bq}^{(n)} = \prod_{i=1}^{2n+1} x_i,
\]
and
\[
{\bu}^{(n)} = \left(\sum_{i=1}^{2n} x_i x_{i+1}\right)+x_{2n+1} x_{1}.
\]
We denote by $\sigma_n$ the inequality ${\bq}^{(n)} \preceq {\bu}^{(n)}$, and write $\Omega$
for the set of all inequalities $\sigma_n$ with $n \geq 1$.
The inequalities $\sigma_n$ already proved useful in \cite{wrz},
where they were employed to establish the nonfinite basis property of a four-element algebra.

For a word $\bp$, let $\ell(\bp)$ denote the length of $\bp$, that is,
the number of variables occurring in $\bp$ counting multiplicities.
For a term $\bt$, $c(\bt)$ denotes the content of $\bt$, that is,
the set of variables occurring in $\bt$.
For an integer $k\geq 1$,
let $L_k({\bt})$ denote the term that is the sum of all words in $\bt$ of length $k$.

Now define a graph ${\mathbb G}_{\bt}$ with vertex set $\mathsf{V}({\mathbb G}_{\bt})=c(L_2({\bt}))$
and edge set $E({\mathbb G}_{\bt})=\{\{x, y\} \mid xy \in L_2({\bt})\}$.
Note that this graph may contain loops (corresponding to the words of the form $x^2$)
but has no multiple edges.
This graph will play a key role in the analysis that follows.
In particular, for $\mathbf{t} = \mathbf{u}^{(n)}$, the graph $\mathbb{G}_{\mathbf{u}^{(n)}}$ is an odd cycle of length $2n+1$,
with vertices $x_1,\dots,x_{2n+1}$ and edges $\{x_i, x_{i+1}\}$ (indices taken modulo $2n+1$).

Define the odd path closure ${\mathbb G}_{\bt}^{odd}$ of ${\mathbb G}_{\bt}$ by
\[
{\mathbb G}_{\bt}^{odd}=\{xy\mid\text{there is an odd path in ${\mathbb G}_{\bt}$ between $x$ and $y$}\}.
\]
(This notation will be used later in Lemma~\ref{lem01}.)

\begin{pro}\label{profree}
Let $n \geq 1$ be an integer. Then ${\bu}^{(m)}$ is ${\bu}^{(n)}$-free for every $m>n$.
\end{pro}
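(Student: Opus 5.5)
The plan is to argue directly from the shape of ${\bu}^{(m)}$: every word in it has length exactly $2$, so any subterm of ${\bu}^{(m)}$ that is the image of ${\bu}^{(n)}$ under a substitution is forced to be very rigid. That rigidity turns a hypothetical occurrence of $\varphi({\bu}^{(n)})$ inside ${\bu}^{(m)}$ into a closed walk of odd length $2n+1$ in the cycle $\mathbb{G}_{{\bu}^{(m)}}$. Such a walk cannot exist when $2n+1<2m+1$.

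First, suppose to the contrary that there are a substitution $\varphi$ and terms $\bp,\br$ with ${\bu}^{(m)}=\bp\,\varphi({\bu}^{(n)})+\br$.
\begin{itemize}
\item For each $i$, $\varphi(x_i)$ is a nonempty finite set of nonempty words. Every word of $\bp\,\varphi(x_ix_{i+1})$ therefore has length at least $2$, with equality only when $\bp$ is the empty word and both factors are single variables.
\item All words of ${\bu}^{(m)}$ have length $2$. Hence $\bp$ is empty and each $\varphi(x_i)$ is a sum of variables; write $\varphi(x_i)=\sum_{a\in A_i}a$ with $A_i\subseteq X$ nonempty.
\item The containment then says that for every $i$ (indices modulo $2n+1$) and every $a\in A_i$, $b\in A_{i+1}$, the word $ab$ lies in ${\bu}^{(m)}$. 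Since ${\bu}^{(m)}$ contains no squares, $a\neq b$, and $\{a,b\}$ is an edge of $\mathbb{G}_{{\bu}^{(m)}}$, which is the cycle $x_1x_2\cdots x_{2m+1}$.
\end{itemize}

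Second, choose $a_i\in A_i$ for each $i$. Then $a_1,a_2,\dots,a_{2n+1},a_1$ is a closed walk of length $2n+1$ in the cycle graph $C_{2m+1}$.

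Third, we show that $C_{2m+1}$ has no closed walk of odd length less than $2m+1$. This is the only step requiring a genuine (if standard) argument. Every closed walk of odd length contains an odd cycle as a subgraph. The only cycle in $C_{2m+1}$ is the whole cycle, of length $2m+1$, and a closed walk containing it has length at least $2m+1$.

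Alternatively, one can argue as follows. A closed walk of length $2n+1<2m+1$ cannot visit all $2m+1$ vertices, so it misses some vertex $v$. It then lives in $C_{2m+1}-v$, which is a path and hence bipartite, and bipartite graphs have no closed walks of odd length.

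Either way, $2n+1<2m+1$ gives a contradiction. Hence ${\bu}^{(m)}$ is ${\bu}^{(n)}$-free for every $m>n$. I expect no real obstacle here; the only care needed is in the length bookkeeping of the first step, namely that $\bp$ must be empty and that the $\varphi(x_i)$ consist of single variables.
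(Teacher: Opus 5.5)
Your proof is correct, and its core is the same as the second half of the paper's proof: an occurrence of $\varphi(\bu^{(n)})$ inside $\bu^{(m)}$ produces a closed walk of length $2n+1$ in the odd cycle $\mathbb{G}_{\bu^{(m)}}$, which is impossible when $n<m$.

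The difference is in how you reach that walk. The paper first notes that no three words of $\bu^{(m)}$ share a variable, which gives $|c(\varphi(x_i))|\le 2$. It then uses a separate inductive argument to rule out that some $\varphi(x_i)$ is a sum of two variables. That argument starts from $\varphi(x_1)=x_{j-1}+x_{j+1}$ and $\varphi(x_2)=\varphi(x_{2n+1})=x_j$, and tracks $c(\varphi(x_{2k+1}))$ and $c(\varphi(x_{2k+2}))$ around the cycle. Only after that does it treat the case where $\varphi$ sends variables to variables.

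You observe instead that $ab\in\bu^{(m)}$ for \emph{every} $a\in A_i$ and $b\in A_{i+1}$. This lets you pick a single representative $a_i\in A_i$ from each set, so the case split disappears and one closed-walk argument covers everything. The paper's route gives the extra, but unneeded, information that $\varphi$ must map variables to variables.

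Your final step is also slightly more precise than the paper's. The paper concludes $2n+1=2m+1$, whereas the odd-cycle fact really gives $2m+1\le 2n+1$, which is what you state. Your alternative argument is a clean, self-contained replacement for the appeal to the closed-odd-walk lemma: a walk of length $2n+1<2m+1$ misses some vertex, and $C_{2m+1}$ minus a vertex is a bipartite path.
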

\begin{proof}
Seeking a contradiction, assume that for some $1 \leq n < m$,
${\bu}^{(m)}$ is not ${\bu}^{(n)}$-free.
Then there exist terms ${\bp}, {\br} \in P_f(X_c^+)$ and a substitution $\varphi$ such that
\[
{\bp}\varphi({\bu}^{(n)}) + {\br} = {\bu}^{(m)},
\]
where ${\bp}$ may be the empty word and ${\br}$ may be the empty set.

Observe that both ${\bu}^{(n)}$ and ${\bu}^{(m)}$ consist solely of words of length $2$,
and that any three distinct words in ${\bu}^{(m)}$ have no variable in common.
It follows that ${\bp}$ must be the empty word, and for every variable $x$ occurring in ${\bu}^{(n)}$,
the image $\varphi(x)$ is either a single variable or a sum of two distinct variables.
Hence we obtain
\begin{equation}\label{un01}
\varphi({\bu}^{(n)}) + \mathbf{r} = {\bu}^{(m)}.
\end{equation}

Without loss of generality, suppose that $\varphi(x_1)$ is a sum of two distinct variables.
Then $\varphi(x_1) = x_{j-1} + x_{j+1}$ for some $j$, where the indices are taken modulo $2m+1$.
All subsequent index arithmetic is understood modulo $2m+1$.
From
\[
(x_{j-1}+x_{j+1})\varphi(x_2)=\varphi(x_1)\varphi(x_2)=\varphi(x_1x_2) \subseteq {\bu}^{(m)},
\]
we must have that $\varphi(x_2)=x_j$.
Similarly, from
\[
\varphi(x_{2n+1})(x_{j-1}+x_{j+1}) =\varphi(x_{2n+1})\varphi(x_1)=\varphi(x_{2n+1}x_1) \subseteq \mathbf{u}^{(m)},
\]
we obtain that $\varphi(x_{2n+1}) = x_j$ as well.

Now consider $\varphi(x_2x_3) = \varphi(x_2)\varphi(x_3)=x_j \varphi(x_3) \subseteq {\bu}^{(m)}$.
Since $x_j$ already appears in the two distinct words $x_{j-1}x_j$ and $x_jx_{j+1}$ from the previous steps,
to avoid $x_j$ appearing in a third distinct word,
$c(\varphi(x_3))$ must be a subset of $\{x_{j-1}, x_{j+1}\}$.
Similarly, from $\varphi(x_3x_4)\subseteq {\bu}^{(m)}$ we deduce that $c(\varphi(x_4))$ is a subset of $\{x_{j-2}, x_j, x_{j+2}\}$.

Continuing this process inductively, we obtain that for each $k \geq 1$:
\[
c(\varphi(x_{2k+1})) \subseteq \{x_{j-2k+1}, x_{j-2k+3}, \dots, x_{j+2k-3}, x_{j+2k-1}\},
\]
\[
c(\varphi(x_{2k+2})) \subseteq \{x_{j-2k}, x_{j-2k+2}, \dots, x_{j+2k-2}, x_{j+2k}\}.
\]
Therefore, $c(\varphi(x_{2n+1}))$ is contained in the set
\[
\{x_{j-2n+1}, x_{j-2n+3}, \dots, x_{j+2n-3}, x_{j+2n-1}\},
\]
which does not contain $x_j$.
This contradicts the previously established fact that $\varphi(x_{2n+1})=x_j$.

Therefore, our initial assumption that $\varphi(x_1)$ is a sum of two distinct variables is false.
Hence $\varphi(x)$ is a single variable for every $x \in c({\bu}^{(n)})$.
Let $y_i$ denote the variable $\varphi(x_i)$, $1\leq i \leq 2n+1$.
From \eqref{un01} we have
\[
y_1y_2,\; y_2y_3,\; \dots,\; y_{2n}y_{2n+1},\; y_{2n+1}y_1 \subseteq \mathbf{u}^{(m)}.
\]
Thus the sequence $y_1, y_2, \dots, y_{2n+1}, y_1$ forms a closed walk of length $2n+1$ in the graph ${\mathbb G}_{\mathbf{u}^{(m)}}$.

A basic fact from graph theory states that every closed walk of odd length contains an odd cycle.
Since $\mathbb{G}_{\mathbf{u}^{(m)}}$ itself is an odd cycle of length $2m+1$,
the only odd cycle it can contain is the entire cycle.
Consequently, we must have that $2n+1 = 2m+1$, that is, $n = m$, contradicting $n < m$.
Therefore, $\mathbf{u}^{(m)}$ is $\mathbf{u}^{(n)}$-free.
\end{proof}

\begin{thm}\label{free02}
Let $S$ be a commutative ai-semiring and $\Sigma$ an equational basis for $S$ that contains an infinite subset of $\Omega$.
If $\mathbf{u}^{(m)}$ is $\mathbf{t}$-free for every inequality $\mathbf{s} \preceq \mathbf{t}$ in $\Sigma \setminus \Omega$ and for every integer $m \geq 1$, then $S$ is nonfinitely based.
\end{thm}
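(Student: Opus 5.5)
We argue via Lemma~\ref{nlemma2}: it suffices to show that no finite subset of $\Sigma$ defines $\mathsf{V}(S)$. Fix a finite subset $\Sigma_0 \subseteq \Sigma$. Since $\Sigma$ contains infinitely many inequalities from $\Omega$ and $\Sigma_0$ is finite, we can choose an integer $m$ such that $\sigma_m \in \Sigma$ holds in $S$, and $m > n$ for every $\sigma_n \in \Sigma_0 \cap \Omega$. We will show that $\sigma_m$, that is, $\mathbf{u}^{(m)} \approx \mathbf{u}^{(m)} + \mathbf{q}^{(m)}$, is not derivable from $\Sigma_0$. Hence $\Sigma_0$ does not define $\mathsf{V}(S)$, and $S$ is nonfinitely based.

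The key step is that no elementary derivation step in the sense of Lemma~\ref{nlemma1}, using an identity from $\Sigma_0$, can be applied to the term $\mathbf{u}^{(m)}$ at all. Suppose $\mathbf{u}^{(m)} = \mathbf{p}\varphi(\mathbf{s}) + \mathbf{r}$, where $\mathbf{s}$ is one side of an identity in $\Sigma_0$. Recall that an inequality $\mathbf{s}' \preceq \mathbf{t}$ abbreviates $\mathbf{t} \approx \mathbf{t} + \mathbf{s}'$, so its two sides are $\mathbf{t}$ and $\mathbf{t}+\mathbf{s}'$. In either case $\mathbf{t}$ is a subterm of that side, so by Lemma~\ref{nlemma0} it is enough to know that $\mathbf{u}^{(m)}$ is $\mathbf{t}$-free.
\begin{itemize}
\item If the inequality lies in $\Sigma_0 \setminus \Omega$, this is exactly the hypothesis of the theorem.
\item If the inequality is $\sigma_n \in \Sigma_0 \cap \Omega$, then $\mathbf{t} = \mathbf{u}^{(n)}$ with $n < m$, and Proposition~\ref{profree} gives that $\mathbf{u}^{(m)}$ is $\mathbf{u}^{(n)}$-free.
\end{itemize}
Thus for every identity in $\Sigma_0$, read in either direction, $\mathbf{u}^{(m)}$ contains no substitution instance of its left side as a subterm. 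So no step of a derivation can start from $\mathbf{u}^{(m)}$.

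Now suppose $\sigma_m$ were derivable from $\Sigma_0$. Since $\mathbf{q}^{(m)}$ has length $2m+1 \geq 3$ while every word of $\mathbf{u}^{(m)}$ has length $2$, the identity is nontrivial. Lemma~\ref{nlemma1} then yields a sequence $\mathbf{u}^{(m)} = \mathbf{t}_1, \ldots, \mathbf{t}_k = \mathbf{u}^{(m)} + \mathbf{q}^{(m)}$ with $k \ge 2$. Its first step writes $\mathbf{t}_1 = \mathbf{p}_1\varphi_1(\mathbf{s}_1) + \mathbf{r}_1$ with $\mathbf{s}_1$ a side of an identity in $\Sigma_0$, which contradicts the previous paragraph. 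Hence $\sigma_m$ is not a consequence of $\Sigma_0$.

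The point that needs the most care is the bookkeeping of sides: an identity in $\Sigma$ may be used in either direction. We must check that both $\mathbf{t}$ and $\mathbf{t} + \mathbf{s}'$ contain $\mathbf{t}$ as a subterm, taking $\mathbf{p}$ to be the empty word and $\mathbf{r} = \mathbf{s}'$, so that Lemma~\ref{nlemma0} covers both directions. If $\Sigma$ contains identities not written as inequalities, we first replace them by the equivalent inequalities $\mathbf{q} \preceq \mathbf{u}$, as in Section~2; the hypothesis is implicitly about this inequality form. We should also check that applying Lemma~\ref{nlemma2} with $\Sigma$ as the infinite basis is legitimate, which it is since $\Sigma$ defines $\mathsf{V}(S)$.
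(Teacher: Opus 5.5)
Your proof is correct and follows the paper's route. You apply compactness (Lemma~\ref{nlemma2}), pick $m$ with $\sigma_m\in\Sigma$ larger than every index $n$ with $\sigma_n$ in the finite subset, and combine Lemmas~\ref{nlemma0} and~\ref{nlemma1} with Proposition~\ref{profree} to show that no derivation step from $\Sigma_0$ can even start at $\mathbf{u}^{(m)}$; you also make explicit what the paper leaves implicit, namely that both sides $\mathbf{t}$ and $\mathbf{t}+\mathbf{s}'$ contain $\mathbf{t}$ as a subterm and that $\sigma_m$ is nontrivial, so a first step must exist.
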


\begin{proof}
By the compactness theorem of equational logic (see Lemma~\ref{nlemma2}),
it is enough to prove that no finite subset of the set $\Sigma$ defines the variety $\mathsf{V}(S)$.

Let $\Sigma'$ be an arbitrary finite subset of $\Sigma$. Since $\Sigma$ contains an infinite subset of $\Omega$,
we can choose an integer $m$ such that
\[
m > \max\{n \geq 1 \mid \sigma_n \in \Sigma'\} \quad \text{and} \quad \sigma_m \in \Sigma.
\]
Then $\sigma_m \notin \Sigma'$.

To prove that $\Sigma'$ cannot define the variety $\mathsf{V}(S)$,
it suffices to verify that $\sigma_m$ is not derivable from $\Sigma'$.
By hypothesis, together with Lemmas \ref{nlemma0} and \ref{nlemma1},
it is sufficient to show that $\mathbf{u}^{(m)}$ is $\mathbf{u}^{(n)}$-free for all $1 \leq n < m$,
which follows immediately from Proposition~\ref{profree}. This completes the proof.
\end{proof}

\section{Applications of Theorem~\ref{free02}}
This section is devoted to applications of Theorem~\ref{free02}.
We prove that three ai-semirings, in particular $SR_6$, are nonfinitely based.

To this end, we first need an infinite equational basis for $SR_6$.
This requires a sufficiently detailed understanding of the identities satisfied by $SR_6$.
We begin by presenting some identities that hold in $SR_6$.

\begin{lem}\label{lem26022201}
$SR_6$ satisfies the identities $\sigma_n$ for all $n \geq 1$, together with
\begin{align}
&x^3 \approx x^2; \label{SR02}\\
&x^2 \approx x+xy; \label{SR03}\\
&x_1 \preceq x_2x_3x_4.\label{SR04}
\end{align}
\end{lem}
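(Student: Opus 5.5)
The plan is to verify each identity directly from the Cayley tables in Table~\ref{6-element ai-semirings}, after first extracting a few structural facts about $SR_6$.

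\emph{Structural facts.} From the addition table, $1$ is the top element, since $a+1=1$ for all $a$. In particular, every sum containing the summand $1$ equals $1$. From the multiplication table, every product $ab$ lies in $\{1,3\}$, and $3c=1c=1$ for all $c$. Consequently every product of three or more elements equals $1$; that is, $SR_6$ is $3$-nilpotent with absorbing element $1$.

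\emph{Identities \eqref{SR02}, \eqref{SR03} and \eqref{SR04}.} The diagonal of the multiplication table consists entirely of $1$'s, so $a^2=1$ for every $a$. Hence both sides of \eqref{SR02} evaluate to $1$.

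For \eqref{SR03}, the left side is $1$, so we check that $a+ab=1$ for all $a,b$. Since $ab\in\{1,3\}$, there are two cases.
\begin{enumerate}[(i)]
\item If $ab=1$, then $a+ab=a+1=1$.
\item If $ab=3$, then $a\neq 3$, because $3b=1$. The row of $3$ in the addition table shows $a+3=1$ for every $a\neq 3$.
\end{enumerate}

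For \eqref{SR04}, the right side $x_2x_3x_4$ is a product of three elements, so it always evaluates to the top element $1$. The inequality therefore holds trivially.

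\emph{The inequalities $\sigma_n$.} This is the only step requiring an idea. The left side $\mathbf{q}^{(n)}$ is a product of $2n+1\geq 3$ factors, so it always evaluates to $1$. We must show that $\varphi(\mathbf{u}^{(n)})=1$ for every assignment $\varphi$.

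Each summand $\varphi(x_ix_{i+1})$ lies in $\{1,3\}$. If some summand equals $1$, the whole sum is $1$. Otherwise every summand equals $3$, and then the sum is $3$, which would violate the inequality. So it suffices to rule out the case where all $2n+1$ cyclic products equal $3$.

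Read off the pairs $\{a,b\}$ with $ab=3$: they are $\{2,6\}$, $\{4,5\}$ and $\{5,6\}$. There are no loops, since $a^2=1$ for all $a$. View these pairs as the edges of a graph on the vertex set $\{2,4,5,6\}$. This graph is the path $2-6-5-4$, which is bipartite.

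If all cyclic products were $3$, then $\varphi(x_1),\dots,\varphi(x_{2n+1}),\varphi(x_1)$ would be a closed walk of odd length $2n+1$ in a bipartite graph. This is impossible. Hence $\varphi(\mathbf{u}^{(n)})=1\geq\varphi(\mathbf{q}^{(n)})$, and $\sigma_n$ holds for every $n\geq 1$.

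The bipartite-graph observation is the only nonroutine point of the proof. The remaining work is table lookups.
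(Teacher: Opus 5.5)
Your proof is correct and takes the same approach as the paper. You verify \eqref{SR02}--\eqref{SR04} directly from the Cayley tables, filling in details the paper calls straightforward, and you handle $\sigma_n$ by observing that the pairs with product $3$ form the loopless path $2-6-5-4$, which admits no closed walk of odd length $2n+1$.
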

\begin{proof}
It is straightforward to verify that $SR_6$ satisfies \eqref{SR02}, \eqref{SR03} and \eqref{SR04},
while the main task is to prove that $\sigma_n$ holds in $SR_6$ for all $n \geq 1$.
Indeed, let $\varphi\colon P_f(X_c^+) \to SR_6$ be an arbitrary semiring homomorphism.
Observe that $1$ is the additive top element of $SR_6$.
By \eqref{SR04} we obtain that $\varphi(\bq^{(n)})=1$.
It suffices to show that $\varphi(\bu^{(n)})=1$.
From the multiplicative Cayley table of $SR_6$ one sees that the product of any two elements is either $1$ or $3$;
consequently, $\varphi(\bw)=1$ or $3$ for every $\bw \in \bu^{(n)}$.
We only need to rule out the possibility that $\varphi(\bw)=3$ for every $\bw \in \bu^{(n)}$.

Suppose, to the contrary, that $\varphi(x_i x_{i+1}) = 3$ for all $i = 1, 2, \dots, 2n+1$, where we set $x_{2n+2} := x_1$ for convenience.
Then $\varphi(x_i)\varphi(x_{i+1}) = 3$ for each $i$.
From the multiplicative Cayley table of $SR_6$, the only unordered pairs $\{a,b\}$ with $ab = 3$ are
\[
\{2,6\},\;\{4,5\},\;\{5,6\}.
\]
Thus each $\varphi(x_i)$ must belong to $\{2,4,5,6\}$, and consecutive elements must form one of these three unordered pairs.
Define a graph $\mathbb{G}$ with vertex set $\{2,4,5,6\}$ and edge set $\{\{2,6\}, \{4,5\}, \{5,6\}\}$.
Observe that $\mathbb{G}$ is simply a path of length $3$, so it contains no cycles at all.
Now consider the closed walk
\[
\varphi(x_1), \varphi(x_2), \dots, \varphi(x_{2n+1}), \varphi(x_1),
\]
which has length $2n+1$.
Since $\mathbb{G}$ has no cycles, such a closed walk of odd length cannot exist in $\mathbb{G}$.
Hence $\varphi(x_i x_{i+1}) = 1$ for some $i$, and so $\varphi(\mathbf{u}^{(n)}) = 1$.
Therefore, $\sigma_n$ holds in $SR_6$.
\end{proof}

Observe that $S_c(ab)$ embeds into $SR_6$;
more precisely, $S_c(ab)$ is isomorphic to the subalgebra $\{1,3,5,6\}$ of $SR_6$.
By Lemma~\ref{lem26022201}, $S_c(ab)$ satisfies the identities \eqref{SR02}, \eqref{SR03}, \eqref{SR04},
and $\sigma_n$ for all $n \geq 1$.
Moreover, it is easy to check that the inequality
\begin{equation}\label{26022301}
x_1x_4\preceq x_1x_2+x_2x_3+x_3x_4
\end{equation}
holds in $S_c(ab)$.
From this, one can derive the more general family of inequalities
\[
x_1x_{2n+2}\preceq x_1x_2+x_2x_3+\cdots+x_{2n+1}x_{2n+2}\quad (n\geq 1),
\]
which we denote by $\delta_n$ ($n \geq 1$). These identities are also satisfied by $S_c(ab)$.
We now proceed to give a complete description of the identities of $S_c(ab)$.

\begin{lem}\label{lem01}
Let $\bq\preceq \bu$ be a nontrivial inequality such that
$\bu=\bu_1+\bu_2+\cdots+\bu_n$ with $\bu_i, \bq \in X^+_c$ for $1\leq i \leq n$.
Then $\bq\preceq \bu$ is satisfied by $S_c(ab)$ if and only if $\bu$ and $\bq$ satisfy one of the following conditions\up:
\begin{enumerate}[$(\rm i)$]
\item  $\ell(\bu_i)\geq 3$ for some $\bu_i\in \bu$;

\item  $c(L_1(\bu))\cap c(L_2(\bu))\neq \emptyset$;

\item  the graph ${\mathbb G}_{\bu}$ contains an odd cycle;

\item  $\bq\in {\mathbb G}_{\bu}^{odd}$.
\end{enumerate}
\end{lem}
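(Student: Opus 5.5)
We prove both directions directly, working in $S_c(ab)=\{a,b,ab,0\}$, where $0$ is the additive top and multiplicative zero and $a+b=0$. A homomorphism $\varphi$ sends each word to $0$ unless it sends every variable into $\{a,b,ab\}$. A word of length $1$ is then sent to $a$, $b$ or $ab$, and a word of length $2$ is sent to $ab$ precisely when its two letters go to $\{a,b\}$ with one $a$ and one $b$; otherwise it goes to $0$. Every word of length $\ge 3$ goes to $0$. Since the order is flat with $0$ on top, $\bq\preceq\bu$ holds if and only if, for every $\varphi$, either $\varphi(\bu)=0$ or $\varphi(\bq)=\varphi(\bu)$.

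\emph{Sufficiency.} We check that each condition forces the inequality.
\begin{enumerate}[(i)]
\item A word of length $\ge3$ always evaluates to $0$, so $\varphi(\bu)=0$.
\item Suppose $x$ occurs both as a word $x\in\bu$ and in a word $xy\in\bu$. If $\varphi(\bu)\neq0$, then all words of $\bu$ share one value. The word $x$ gives $\varphi(x)$, while $xy$ gives $ab$ or $0$. So $\varphi(x)=ab$, and then $\varphi(xy)=0$, a contradiction.
\item Suppose $\varphi(\bu)\ne0$ and $L_2(\bu)\ne\emptyset$. Then every edge of ${\mathbb G}_\bu$ gets value $ab$. This means $\varphi$ properly $2$-colours ${\mathbb G}_\bu$ by $a,b$, which is impossible if the graph has an odd cycle. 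Loops count as odd cycles and give $x^2\mapsto0$ at once.
\item If $\varphi(\bu)\ne0$, the same $2$-colouring argument shows that vertices joined by an odd path get different colours. Hence $\varphi(\bq)=ab=\varphi(\bu)$.
\end{enumerate}
In (iv) we use that $\bu$ then contains length-$2$ words, and none of length $1$ survives with a different value. Indeed, if $\bu$ has both a length-$1$ word and a length-$2$ word, then a nonzero value forces the length-$1$ word to be $ab$ while the length-$2$ words are also $ab$. This is consistent, so we must handle it: a length-$1$ word $z$ with $\varphi(z)=ab$ is allowed, and $\varphi(\bq)=ab$ still matches.

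\emph{Necessity.} Assume none of (i)--(iv) holds; we construct $\varphi$ with $\varphi(\bu)\ne0$ and $\varphi(\bq)\neq\varphi(\bu)$. Then every word of $\bu$ has length $\le2$, the variables in $L_1(\bu)$ and $L_2(\bu)$ are disjoint, and ${\mathbb G}_\bu$ is bipartite with no loops. We distinguish two cases.

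\emph{Case $L_2(\bu)\ne\emptyset$.} Set $\varphi(z)=ab$ for every $z\in c(L_1(\bu))$, and choose a proper $a/b$ colouring of each component of ${\mathbb G}_\bu$. Then $\varphi(\bu)=ab$, and it remains to pick the colouring and the values of the other variables so that $\varphi(\bq)\ne ab$.
\begin{itemize}
\item If $\ell(\bq)\ge3$, then $\varphi(\bq)=0$ automatically.
\item If $\ell(\bq)=1$, with $\bq=x$, send $x$ to $a$ if $x$ is a vertex of ${\mathbb G}_\bu$ (using a colouring with $x$ coloured $a$), and to $0$ otherwise. Note that $x\notin c(L_1(\bu))$: otherwise $x\in\bu$, and the inequality would be trivial.
\item If $\bq=xy$ and $xy\notin{\mathbb G}^{odd}_\bu$, we need $x,y$ not to receive opposite colours from $\{a,b\}$. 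If either variable lies outside the graph vertices, send it to $0$; if it lies in $c(L_1(\bu))$, it already has value $ab$, which makes $\varphi(\bq)=0$. If $x,y$ are in distinct components, colour both $a$. If they are in the same component, an even path joins them (or $x=y$, giving $\varphi(x^2)=0$), so they receive the same colour.
\end{itemize}

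\emph{Case $\bu$ consists only of variables.} Set $\varphi\equiv ab$ on $c(\bu)$ and $\varphi=0$ elsewhere. Then $\varphi(\bu)=ab$, while any $\bq\notin\bu$ either contains a variable outside $c(\bu)$ or has length $\ge2$, and so evaluates to $0$.

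The main obstacle is the bookkeeping in the necessity direction: choosing a single colouring that works simultaneously for every component, and handling the degenerate cases (loops, $x=y$, variables of $\bq$ outside $c(\bu)$, and $\bq\in\bu$ being excluded by nontriviality).
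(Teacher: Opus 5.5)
Your proof is correct: your necessity argument is essentially the paper's, while your sufficiency argument takes a different route.

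\textbf{Necessity.} Both arguments rest on the same fact. Once (i)--(iii) fail, $\mathbb{G}_{\bu}$ is a loopless bipartite graph whose components can be $a/b$-coloured independently, with the variables of $L_1(\bu)$ sent to $ab$. The paper organizes this more economically.
\begin{enumerate}
\item It fixes one colouring $\varphi$. The requirement $\varphi(\bq)=\varphi(\bu)=ab$ alone forces $\bq=xy$ with $x\in A$ and $y\in B$. This single evaluation disposes of all your cases at once: $\ell(\bq)=1$, $\ell(\bq)\ge 3$, $x=y$, and variables of $\bq$ outside $c(L_2(\bu))$. It also covers $L_2(\bu)=\emptyset$ with $A=B=\emptyset$.
\item It then flips the colouring on the component of $x$. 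This forces $y$ into that component on the opposite side, which yields an odd path.
\end{enumerate}
Your contrapositive case analysis instead builds a tailored colouring for each shape of $\bq$. It is longer but equally valid.

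\textbf{Sufficiency.} Here the routes genuinely differ. The paper argues through identities already noted to hold in $S_c(ab)$: \eqref{SR02}, \eqref{SR03}, \eqref{SR04} and $\delta_n$. It uses them to show $\bu\succeq x^2$ (hence $\varphi(\bu)=0$) or, for (iv), $\bu\succeq\bq$ directly. You evaluate homomorphisms directly, observing that $\varphi(\bu)\neq 0$ forces a proper $a/b$-colouring of $\mathbb{G}_{\bu}$.

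\textbf{Trade-off.} Your version is self-contained and does not depend on first checking that $\delta_n$ holds in $S_c(ab)$. The paper's version has the advantage that the same chains of inequalities are reused verbatim as the derivations in Proposition~\ref{pro26012310}.

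A minor point: the paragraph you append after (iv) is redundant. You have already shown that $\varphi(\bu)=ab$ whenever $\varphi(\bu)\neq 0$.
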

\begin{proof}
We first prove sufficiency.
Suppose that one of the conditions (i)--(iii) holds.
Let $\varphi: P_f(X^+_c)\to S_c(ab)$ be an arbitrary semiring homomorphism.
We shall show that $\varphi(\mathbf{u})=0$, and then
\[
\varphi(\mathbf{u})=0=0+\varphi(\mathbf{q})=\varphi(\mathbf{u})+\varphi(\mathbf{q})=\varphi(\mathbf{u}+\mathbf{q}),
\]
since $0$ is the additive top element of $S_c(ab)$.

If the condition (i) holds, or if ${\mathbb G}_{\bu}$ contains a loop,
then $\varphi(\mathbf{u})=0$, since $S_c(ab)$ satisfies the identities \eqref{SR02} and \eqref{SR04}.
If the condition (ii) holds, or if ${\mathbb G}_{\bu}$ contains an odd cycle with length at least $3$, then $\varphi(\mathbf{u})=0$,
since $S_c(ab)$ satisfies \eqref{SR02}, \eqref{SR03}, \eqref{SR04}, and $\delta_n$, $n\geq 1$.

Now suppose that the condition (iv) holds. Then there exist $x_1$, $x_2, \ldots$, $x_{2n+1}$, $x_{2n+2}\in c(L_2(\bu))$
for some integer $n\geq 1$ such that $\bq=x_1x_{2n+2}$,
and
\[
x_1x_2,\;  x_2x_3, \; \ldots,\;  x_{2n}x_{2n+1}, \; x_{2n+1}x_{2n+2}\in L_2(\bu).
\]
Applying the inequality $\delta_n$, we obtain
\[
\bu\succeq x_1x_2+x_2x_3\cdots +x_{2n}x_{2n+1}+x_{2n+1}x_{2n+2}\succeq x_1x_{2n+2}=\bq.
\]
Thus $\bq\preceq \bu$ holds in $S_c(ab)$.

We now prove necessity.
Suppose that $S_c(ab)$ satisfies $\mathbf{q}\preceq \mathbf{u}$. We shall show that one of conditions (i)--(iv) must hold.

Assume that (i)--(iii) are false. We will show that (iv) holds.
Under this assumption, $\mathbf{u} = L_1(\mathbf{u}) + L_2(\mathbf{u})$ with
$c(L_1(\mathbf{u})) \cap c(L_2(\mathbf{u})) = \emptyset$, all words in $L_2(\mathbf{u})$ are linear,
and the graph $\mathbb{G}_{\mathbf{u}}$ contains no odd cycles.
Hence $\mathbb{G}_{\mathbf{u}}$ is bipartite, and so we can write
\[
L_2(\mathbf{u}) = x_1y_1 + x_2y_2 + \cdots + x_ny_n,
\]
where $\{x_1, x_2, \dots, x_n\}$ and $\{y_1, y_2, \dots,y_n\}$ are disjoint.
Set $A = \{x_1, x_2, \dots, x_n\}$, $B = \{y_1, y_2, \dots, y_n\}$.

Define a semiring homomorphism $\varphi\colon P_f(X^+_c) \to S_c(ab)$ by: for any $x \in X$,
\[
\varphi(x) =
\begin{cases}
ab, & x \in c(L_1(\mathbf{u})), \\[2pt]
a,  & x \in A, \\[2pt]
b,  & x \in B, \\[2pt]
0,  & \text{otherwise}.
\end{cases}
\]
Then
\[
\varphi(\mathbf{u})=\varphi(L_1(\mathbf{u}) + L_2(\mathbf{u}))=\varphi(L_1(\mathbf{u}))+\varphi(L_2(\mathbf{u}))=ab+ab=ab.
\]
Since $\mathbf{q}\preceq \mathbf{u}$ holds in $S_c(ab)$,
we must have $\varphi(\mathbf{q}) \leq \varphi(\mathbf{u})$ and consequently $\varphi(\mathbf{q}) = ab$.

Because $S_c(ab)$ satisfies \eqref{SR02} and \eqref{SR04},
$\mathbf{q}$ must be a linear word of length at most $2$.
If $\ell(\mathbf{q}) = 1$, then $\mathbf{q} \in L_1(\mathbf{u})$,
which would make $\mathbf{q}\preceq \mathbf{u}$ trivial, a contradiction.
Hence $\ell(\mathbf{q}) = 2$; write $\mathbf{q} = xy$ with $|c(\mathbf{q}) \cap A| = |c(\mathbf{q}) \cap B| = 1$.
Without loss of generality, assume that $x \in A$ and $y \in B$.

Now define the neighborhood $N(x)$ of $x$ in $\mathbb{G}_{\mathbf{u}}$:
\[
N(x) = \{ z \in V(\mathbb{G}_{\mathbf{u}}) \mid z = x \text{ or there is a path between } z \text{ and } x \}.
\]
Consider another semiring homomorphism $\psi: P_f(X^+_c) \to S_c(ab)$ by: for any $t \in X$,
\[
\psi(t) =
\begin{cases}
ab, & t \in c(L_1(\mathbf{u})), \\[2pt]
a,  & t \in (A \setminus N(x)) \cup (B \cap N(x)), \\[2pt]
b,  & t \in (B \setminus N(x)) \cup (A \cap N(x)), \\[2pt]
0,  & \text{otherwise}.
\end{cases}
\]
Then $\psi(\mathbf{u})=ab$ and $\psi(x)=b$.
This forces $\psi(y) = a$, which implies $y \in B \cap N(x)$.
Since $\mathbb{G}_{\mathbf{u}}$ is bipartite, it follows that
there exists a path of odd length between $x$ and $y$.
Therefore, $\bq\in {\mathbb G}_{\bu}^{odd}$.
\end{proof}

\begin{pro}\label{pro26012310}
$\mathsf{V}(S_c(ab))$ is the commutative ai-semiring variety defined by the identities
\eqref{SR02}, \eqref{SR03}, \eqref{SR04}, and \eqref{26022301}.
\end{pro}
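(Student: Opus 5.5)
Let $\Sigma$ denote the set of identities \eqref{SR02}, \eqref{SR03}, \eqref{SR04} and \eqref{26022301}. The plan is the standard two-step argument. First, $S_c(ab)$ satisfies every identity in $\Sigma$; this was already recorded (via the embedding into $SR_6$, Lemma~\ref{lem26022201}, and a direct check of \eqref{26022301}). Second, every nontrivial inequality $\bq\preceq\bu$ satisfied by $S_c(ab)$, with $\bq$ a word and $\bu=\bu_1+\cdots+\bu_n$, is derivable from $\Sigma$. By the reduction in Section~2 this suffices. By Lemma~\ref{lem01}, such an inequality satisfies one of conditions (i)--(iv), so I treat them case by case. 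In each case I derive it from $\Sigma$ rather than merely verifying it in $S_c(ab)$.

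Before the case analysis I establish some consequences of $\Sigma$.
\begin{enumerate}[(a)]
\item From \eqref{SR04} together with \eqref{SR02} and \eqref{SR03}, derive $x_1\preceq \bw$ for every word $\bw$ of length at least $3$. Substituting into \eqref{SR04} gives this for length exactly $3$. For longer words, first derive $x\preceq x^2$ from \eqref{SR03}, then $x^2\preceq x^3$ from \eqref{SR02}. These show that the ``zero-like'' term absorbs everything. Concretely, I aim for $\bt\preceq \bw$ for every term $\bt$ and every word $\bw$ with $\ell(\bw)\ge3$.
\item Derive $\bt\preceq x^2$ for all $\bt$. Since $x^2\approx x^3$ and $x^3$ has length $3$, this follows from (a).
\item If $x$ occurs both as a length-one word and inside a length-two word $xy$ of $\bu$, then $\bu\succeq x+xy\approx x^2$ by \eqref{SR03}, so $\bu$ dominates everything.
\item By induction on $n$, derive $\delta_n$ from \eqref{26022301}. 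Apply \eqref{26022301} to $x_1x_2+x_2x_3+x_3x_4$ to obtain $x_1x_4$, which shortens the path by two.
\end{enumerate}
With these in hand, cases (i) and (ii) of Lemma~\ref{lem01} and loops are immediate. Case (iv) follows from $\delta_n$ exactly as in the sufficiency part of Lemma~\ref{lem01}: the argument given there is already a syntactic derivation.

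The remaining case, and the main obstacle, is (iii), when $\mathbb G_{\bu}$ contains an odd cycle of length at least $3$. Let the cycle be $y_1y_2,\dots,y_{2k+1}y_1$. The path $y_1y_2,\dots,y_{2k}y_{2k+1}$ has odd length $2k$? No: it has $2k$ edges, which is even. So I instead use the path $y_1y_2,\dots,y_{2k+1}y_1$ together with $y_1y_2$, which yields an odd walk from $y_1$ back to $y_1$ of length $2k+1$. Applying $\delta_k$ to it, with $x_1=x_{2k+2}=y_1$ (substitutions need not be injective), gives $\bu\succeq y_1^2$. By (b), $\bu$ is then above every term, so $\bq\preceq\bu$ is derivable. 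The point to check carefully is that $\delta_k$ may be applied under a non-injective substitution and that the walk has the correct parity. Once this is checked, every case reduces to $\bu\succeq x^2$ for some $x$ or to an instance of $\delta_n$, which completes the proof.
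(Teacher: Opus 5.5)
Your proposal is correct and follows the paper's proof step for step. It uses the same case split from Lemma~\ref{lem01}. Case (iii) is handled by applying $\delta_k$ under the non-injective substitution $x_{2k+2}\mapsto y_1$, giving $\bu\succeq y_1^2\approx y_1^3\succeq\bq$, and case (iv) is handled directly by $\delta_n$. Your inductive derivation of $\delta_n$ from \eqref{26022301} also fills in a step the paper only asserts.

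The exposition needs tidying in two places. In case (iii), the closed walk you need is just the cycle itself with its $2k+1$ edges; appending another copy of $y_1y_2$ would make it even. In (a), words of length greater than $3$ are handled directly by substituting a word for $x_4$ in \eqref{SR04}, so the detour through $x\preceq x^2\preceq x^3$ is unnecessary.
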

\begin{proof}
We know that $\mathsf{V}(S_c(ab))$ satisfies the identities
\eqref{SR02}, \eqref{SR03}, \eqref{SR04}, and \eqref{26022301}.
It remains to show that every inequality of $S_c(ab)$ is derivable from
\eqref{SR02}, \eqref{SR03}, \eqref{SR04}, and \eqref{26022301}.
Let $\bq \preceq \bu$ be such a nontrivial inequality,
where
$\bu=\bu_1+\bu_2+\cdots+\bu_n$ with $\bu_i, \bq \in X^+_c$ for $1\leq i \leq n$.
By Lemma \ref{lem01} it is enough to consider the following four cases.

\textbf{Case 1.} $\ell(\bu_i)\geq 3$ for some $\bu_i\in \bu$. Then
\[
\bu \succeq \bu_i \stackrel{\eqref{SR04}}\succeq \bq.
\]
This implies $\bu \succeq \bq$.

\textbf{Case 2.} $c(L_1(\bu))\cap c(L_2(\bu))\neq\emptyset$.
Take $x$ in $c(L_1(\bu))\cap c(L_2(\bu))$.
Then $xy\in L_2(\bu)$ for some $y\in X$.
Now we have
\[
\bu \succeq x+xy \stackrel{\eqref{SR03}}\approx x^2 \stackrel{\eqref{SR02}}\approx x^3 \stackrel{\eqref{SR04}}\succeq \bq.
\]
This derives $\bu \succeq \bq$.

\textbf{Case 3.}  The graph ${\mathbb G}_{\bu}$ contains an odd cycle.
Then $L_2(\bu)$ contains
$x_1x_2, x_2x_3,$ $ x_3x_4, \ldots, x_{2k+1}x_1$
for some $x_1, x_2, \ldots, x_{2k+1}\in X$ and $k\geq1$.
Then
\[
\bu \succeq x_1x_2+x_2x_3+\cdots+x_{2k+1}x_1 \stackrel{\delta_k}\succeq x_1^2 \stackrel{\eqref{SR02}}\approx x_1^3 \stackrel{\eqref{SR04}}\succeq \bq.
\]
This implies $\bu \succeq \bq$.

\textbf{Case 4.} $\bq\in {\mathbb G}_{\bu}^{odd}$.
Then $\bq=xy$ for some $x, y\in X$, and there exists an odd path in ${\mathbb G}_{\bu}$ between $x$ and $y$,
whose length is at least three.
So we may assume that $x_1x_2, x_2x_3, \ldots, x_{2n+1}x_{2n+2}\in L_2(\bu)$ for some $n\geq 1$,
where $x_1=x$ and $x_{2n+2}=y$.
Now
\[
\bu\succeq x_1x_2+x_2x_3+\cdots+x_{2n+1}x_{2n+2}\stackrel{\delta_n}\succeq x_1x_{2n+2}=xy=\bq.
\]
This derives $\bu \succeq \bq$.
\end{proof}

\begin{remark}\label{rem:basis-comparison}
It follows from \cite[Proposition 3.2]{rjzl} and \cite[Lemma 2.1]{jrz} that $\mathsf{V}(S_c(ab))$ admits another finite equational basis. This basis, however, contains more identities than the one given in Proposition~\ref{pro26012310} and is obtained via a completely different approach, namely by analyzing the subdirectly irreducible members of the variety. More importantly, it is less convenient for the purposes of our subsequent discussion on the subvariety lattice of $\mathsf{V}(SR_6)$.
\end{remark}

We now present a countably infinite equational basis for $SR_6$.

\begin{pro}\label{proSR}
$\mathsf{V}(SR_6)$ is the commutative ai-semiring variety defined by the identities
\eqref{SR02}, \eqref{SR03}, \eqref{SR04} and $\sigma_n$ for all $n \geq 1$.
\end{pro}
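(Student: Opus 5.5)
The plan is to follow the pattern of Proposition~\ref{pro26012310}. By Lemma~\ref{lem26022201}, $SR_6$ satisfies \eqref{SR02}, \eqref{SR03}, \eqref{SR04} and every $\sigma_n$, so it remains to show that every nontrivial inequality $\bq\preceq\bu$ of $SR_6$ is derivable from them. Here $\bq$ is a word and $\bu=\bu_1+\cdots+\bu_n$. Since $S_c(ab)$ embeds in $SR_6$, such an inequality also holds in $S_c(ab)$. Hence by Lemma~\ref{lem01} one of conditions (i)--(iv) holds. I will show that (i)--(iii) give derivations, and that (iv) without (i)--(iii) cannot occur in $SR_6$.

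\emph{Derivations.} Cases (i) and (ii), and a loop $x^2\in L_2(\bu)$, are handled verbatim as Cases 1 and 2 of Proposition~\ref{pro26012310}. Each uses only \eqref{SR02}, \eqref{SR03}, \eqref{SR04}; for a loop, $\bu\succeq x^2\approx x^3\succeq\bq$. For an odd cycle $x_1x_2,\dots,x_{2k+1}x_1$ in $\mathbb G_{\bu}$ with $k\ge1$, I replace $\delta_k$ by $\sigma_k$, after renaming variables by a substitution:
\[
\bu\succeq x_1x_2+\cdots+x_{2k+1}x_1\succeq x_1x_2\cdots x_{2k+1}\succeq \bq,
\]
where the last step is \eqref{SR04}, since the product has length at least $3$.

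\emph{Excluding case (iv) alone.} Suppose (i)--(iii) fail. Then $\bu=L_1(\bu)+L_2(\bu)$, the two contents are disjoint, and $\mathbb G_{\bu}$ is bipartite and loopless. I will build a homomorphism $\varphi\colon P_f(X_c^+)\to SR_6$ with $\varphi(\bu)=3$ and $\varphi(\bq)\not\le 3$. From the addition table, the only element $\le 3$ is $3$ itself. The key tool is that the pairs with product $3$ form the path $2-6-5-4$. The assignment is as follows.
\begin{itemize}
\item Every variable of $L_1(\bu)$ is sent to $3$.
\item Every variable outside $c(\bu)$ is sent to $1$.
\item Pick a variable $x$ of $\bq$ lying in $\mathbb G_{\bu}$, if one exists. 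On the component of $x$, send a vertex at distance $d$ from $x$ to $2$ if $d=0$, to $6$ if $d=1$, to $5$ if $d\ge2$ is even, and to $4$ if $d\ge3$ is odd.
\item On every other component, use a bipartition: one side is sent to $5$ and the other to $4$. If the other variable $y$ of $\bq$ lies in such a component, put $y$ on the side sent to $5$.
\end{itemize}
Since the graph is bipartite, adjacent vertices have distances differing by exactly $1$. Each edge is therefore mapped to one of the pairs $\{2,6\},\{6,5\},\{5,4\}$, so $\varphi(L_2(\bu))=3$. Together with $\varphi(L_1(\bu))=3$ this gives $\varphi(\bu)=3$.

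\emph{Checking $\varphi(\bq)\ne3$.} Any word of length at least $3$ maps to $1$. A single variable not in $L_1(\bu)$ maps to an element of $\{1,2,4,5,6\}$. All squares are $1$. Any product involving the value $3$ or $1$ is $1$. For $\bq=xy$ with both variables in the graph, $y$ cannot be adjacent to $x$, since otherwise the inequality is trivial. So $y$ lies at distance $\ge2$ or in another component, and $\varphi(y)\in\{4,5\}$. Then $\varphi(\bq)=2\cdot4=2\cdot5=1$. In particular, when $\bq\in\mathbb G_{\bu}^{odd}$ the inequality fails in $SR_6$, which is the desired contradiction.

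I expect this last construction to be the main obstacle. The point is to see that, unlike in $S_c(ab)$, the path structure of the ``product $=3$'' graph of $SR_6$ allows a distance-based labelling. That labelling keeps every edge at value $3$ while breaking the odd-path consequence $\delta_n$. The remaining checks are routine table lookups.
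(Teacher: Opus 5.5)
Your proof is correct and follows the paper's route: you reduce via Lemma~\ref{lem01}, derive cases (i)--(iii) (with $\sigma_k$ replacing $\delta_k$ for odd cycles), and rule out the remaining case with a homomorphism into $SR_6$ that uses the path $2$--$6$--$5$--$4$ so that $\bu\mapsto 3$ while $\bq\mapsto 1$. The only differences are minor: the paper labels the bipartition directly ($x\mapsto 2$, $y\mapsto 4$, the rest of $x$'s side $\mapsto 5$, the rest of $y$'s side $\mapsto 6$, which needs $x,y$ on opposite sides), whereas your distance-based labelling works for any nonadjacent $y$; you also treat loops explicitly, which the paper's Case~3 (stated only for $k\ge 1$) leaves implicit.
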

\begin{proof}
By Lemma~\ref{lem26022201}, $SR_6$ satisfies the identities \eqref{SR02}, \eqref{SR03}, \eqref{SR04} and $\sigma_n$ for all $n \geq 1$.
Thus it suffices to show that every inequality holding in $SR_6$ is derivable from these identities.
Let $\mathbf{q} \preceq \mathbf{u}$ be such a nontrivial inequality,
where
$\bu=\bu_1+\bu_2+\cdots+\bu_n$ with $\bu_i, \bq \in X^+_c$ for $1\leq i \leq n$.
Since $S_c(ab)$ embeds into $SR_6$, the same inequality also holds in $S_c(ab)$.
By Lemma~\ref{lem01}, it is enough to consider the following four cases.

The proofs for Cases 1 and 2 are identical to those given in the proof of Proposition~\ref{pro26012310} for $S_c(ab)$; we omit them here.

\textbf{Case 3.} The graph $\mathbb{G}_{\mathbf{u}}$ contains an odd cycle.
Then $L_2(\mathbf{u})$ contains words of the form
\[
x_1x_2,\; x_2x_3,\; x_3x_4,\; \ldots,\; x_{2k+1}x_1
\]
for some $x_1, x_2, \ldots, x_{2k+1} \in X$ and $k \geq 1$.
Consequently,
\[
\mathbf{u} \succeq x_1x_2 + x_2x_3 + \cdots + x_{2k+1}x_1
        \stackrel{\sigma_k}{\succeq} x_1x_2x_3\cdots x_{2k+1}
        \stackrel{\eqref{SR04}}{\succeq} \mathbf{q}.
\]
This implies $\bu \succeq \bq$.

\textbf{Case 4.} $\bq\in {\mathbb G}_{\bu}^{odd}$.
Then $\bq=xy$ for some $x, y\in X$, and there exists is an odd path in ${\mathbb G}_{\bu}$ between $x$ and $y$,
whose length is at least three. Since Cases 1--3 are already excluded,
we may also assume that $\ell(\bu_k) \leq 2$ for all $\bu_k\in \bu$, $\bu_k$ is linear for all $\bu_k \in L_2(\bu)$,
$c(L_1(\bu))\cap c(L_2(\bu))=\emptyset$, and ${\mathbb G}_{\bu}$ contains no odd cycles.
Thus the graph ${\mathbb G}_{\bu}$ is bipartite.

Consequently, we may write
\[
L_2(\bu) = x_1y_1 + x_2y_2 +\cdots + x_ny_n,
\]
where $\{x_1, x_2, \ldots, x_n\}$ and $\{y_1, y_2, \ldots, y_n\}$ are disjoint.
Set $A = \{x_1, x_2, \ldots, x_n\}$ and $B = \{y_1, y_2, \ldots, y_n\}$.
Let $\alpha\colon P_f(X^+_c) \to SR_6$ be a semiring homomorphism such that
$\alpha(z)=5$ if $z \in A$, $\alpha(z)=6$ if $z \in B$, and $\alpha(z)=3$ otherwise.
Then $\alpha(\bu)=3$ and since $\alpha(\mathbf{q}) \leq \alpha(\mathbf{u})$,
we must have that $\alpha(\mathbf{q}) = 3$.
This implies that $x \neq y$; without loss of generality, suppose $x \in A$ and $y \in B$.

Now consider the semiring homomorphism $\varphi\colon P_f(X^+_c) \to SR_6$ by: for any $z \in X$,
\[
\varphi(z) =
\begin{cases}
2,  & z=x, \\[2pt]
4,  & z=y, \\[2pt]
5,  & z \in A\backslash\{x\}, \\[2pt]
6,  & z \in B\backslash\{y\}, \\[2pt]
3,  & \text{otherwise}.
\end{cases}
\]
Then
\[
\varphi(\bq)=\varphi(xy)=\varphi(x)\varphi(y)=2\cdot4=1
\]
and
\[
\varphi(\bu)=\varphi(L_1(\bu)+L_2(\bu))=\varphi(L_1(\bu))+\varphi(L_2(\bu))=3+2\cdot 6+5\cdot6+5\cdot 4=3.
\]
Thus $1\leq 3$, a contradiction.
Hence Case 4 cannot occur.
\end{proof}

\begin{cor}\label{cor26022310}
$\mathsf{V}(S_c(ab))$ is the subvariety of $\mathsf{V}(SR_6)$ defined by the inequality \eqref{26022301}.
\end{cor}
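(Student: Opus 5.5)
We combine Proposition~\ref{pro26012310} and Proposition~\ref{proSR}. Let $\mathcal{W}$ be the subvariety of $\mathsf{V}(SR_6)$ defined by \eqref{26022301}. By Proposition~\ref{proSR}, $\mathcal{W}$ is the commutative ai-semiring variety defined by \eqref{SR02}, \eqref{SR03}, \eqref{SR04}, \eqref{26022301} and all $\sigma_n$, $n\geq 1$. By Proposition~\ref{pro26012310}, $\mathsf{V}(S_c(ab))$ is defined by \eqref{SR02}, \eqref{SR03}, \eqref{SR04} and \eqref{26022301} alone. So the two varieties coincide once we show that every $\sigma_n$ is derivable from \eqref{SR02}, \eqref{SR03}, \eqref{SR04} and \eqref{26022301}.

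There are two ways to get this. The quickest is semantic. $S_c(ab)$ embeds into $SR_6$ as the subalgebra $\{1,3,5,6\}$, so $S_c(ab)$ satisfies every $\sigma_n$ by Lemma~\ref{lem26022201}. Hence $\sigma_n$ holds in $\mathsf{V}(S_c(ab))$. Proposition~\ref{pro26012310} then says it is a consequence of the four listed identities.

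Alternatively, we can derive $\sigma_n$ directly, as in Case~3 of Proposition~\ref{pro26012310}:
\[
x_1x_2+\cdots+x_{2n+1}x_1 \stackrel{\delta_n}{\succeq} x_1^2 \stackrel{\eqref{SR02}}{\approx} x_1^3 \stackrel{\eqref{SR04}}{\succeq} x_1x_2\cdots x_{2n+1}.
\]
Here $\delta_n$ itself follows from \eqref{26022301} by induction on $n$. For the step, \eqref{26022301} applied to $x_1,x_{2n},x_{2n+1},x_{2n+2}$ combines $x_1x_{2n}$ (given by the inductive hypothesis) with the last two edges. The instance of $\delta_n$ used in the display is the one whose endpoints are both $x_1$, i.e.\ we substitute $x_{2n+2}\mapsto x_1$.

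With the $\sigma_n$ shown redundant, the two bases define the same variety, so $\mathcal{W}=\mathsf{V}(S_c(ab))$. The containment $\mathsf{V}(S_c(ab))\subseteq\mathsf{V}(SR_6)$ is immediate from the embedding. No step is a real obstacle. The only point to check is that the inductive derivation of $\delta_n$ from \eqref{26022301}, which the paper asserted without proof, goes through. The semantic route avoids even that.
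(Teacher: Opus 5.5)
Your proposal is correct and follows the paper's route. The paper simply says the corollary follows immediately from Propositions~\ref{pro26012310} and~\ref{proSR}, and your argument supplies the missing step: the $\sigma_n$ are redundant relative to \eqref{SR02}, \eqref{SR03}, \eqref{SR04}, \eqref{26022301}, either semantically because they hold in the subalgebra $S_c(ab)$ of $SR_6$, or syntactically via $\delta_n$ with $x_{2n+2}\mapsto x_1$ followed by \eqref{SR02} and \eqref{SR04}, where your inductive derivation of $\delta_n$ from \eqref{26022301} is also sound.
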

\begin{proof}
This follows from Propositions \ref{pro26012310} and \ref{proSR} immediately.
\end{proof}

\begin{cor}\label{LTN}
The ai-semiring $SR_6$ is nonfinitely based.
\end{cor}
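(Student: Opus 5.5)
We will apply Theorem~\ref{free02} to $S=SR_6$, using the equational basis provided by Proposition~\ref{proSR}. Following the convention of Section~2, we first replace each of the identities \eqref{SR02} and \eqref{SR03} by the inequalities of the form ``word $\preceq$ term'' that define the same variety. We then take $\Sigma$ to consist of these inequalities, together with \eqref{SR04} and all $\sigma_n$ ($n\ge 1$). Concretely, $\Sigma\setminus\Omega$ consists of
\[
x^3\preceq x^2,\quad x^2\preceq x^3,\quad x^2\preceq x+xy,\quad x\preceq x^2,\quad xy\preceq x^2,\quad x_1\preceq x_2x_3x_4 .
\]
By Proposition~\ref{proSR} this $\Sigma$ is an equational basis for $SR_6$. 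It contains all of $\Omega$, and in particular an infinite subset of $\Omega$. It then remains to verify the freeness hypothesis of Theorem~\ref{free02}.

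So the key step is to show that, for every $m\ge1$, the term $\mathbf u^{(m)}$ is $\mathbf t$-free for each right-hand side $\mathbf t\in\{x^2,\ x^3,\ x+xy,\ x_2x_3x_4\}$. The observations we use are that every word of $\mathbf u^{(m)}$ has length exactly $2$, and that every such word is linear, since $x_i\neq x_{i+1}$ because $2m+1\ge 3$. Suppose $\mathbf p\varphi(\mathbf t)+\mathbf r=\mathbf u^{(m)}$.
\begin{enumerate}[$(\rm i)$]
\item For $\mathbf t=x^2$ or $x^3$, pick any word $\mathbf w\in\varphi(x)$. Then $\mathbf p\mathbf w^2$ or $\mathbf p\mathbf w^3$ lies in $\mathbf u^{(m)}$. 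But this word is not linear, which is impossible.
\item For $\mathbf t=x_2x_3x_4$, every word of $\mathbf p\varphi(\mathbf t)$ has length at least $3$, which is again impossible.
\item For $\mathbf t=x+xy$, choose words $\mathbf w\in\varphi(x)$ and $\mathbf v\in\varphi(y)$. Then both $\mathbf p\mathbf w$ and $\mathbf p\mathbf w\mathbf v$ lie in $\mathbf u^{(m)}$. These have different lengths, contradicting the fact that all words of $\mathbf u^{(m)}$ have length $2$.
\end{enumerate}

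Hence all hypotheses of Theorem~\ref{free02} are met, and $SR_6$ is nonfinitely based. The only point needing care, which I expect to be the main (mild) obstacle, is the bookkeeping step. One must check that splitting the identities into inequalities yields a set $\Sigma$ that is still an equational basis. One must also check that it is legitimate to treat $\sigma_n$ as belonging to $\Omega$ and not to $\Sigma\setminus\Omega$. Both follow from the discussion preceding the definition of flat semirings in Section~2. Once that is settled, the freeness checks above are purely length and linearity arguments.
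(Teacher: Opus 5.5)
Your proposal is correct and follows the paper's proof: you apply Theorem~\ref{free02} to the basis of Proposition~\ref{proSR} and check, by length and linearity considerations, that $\mathbf{u}^{(m)}$ is free with respect to $x^2$, $x^3$, $x+xy$ and $x_2x_3x_4$. Splitting \eqref{SR02} and \eqref{SR03} into inequalities, so that Theorem~\ref{free02} applies literally, and writing out the freeness checks just make explicit what the paper leaves implicit; read $\mathbf{p}\mathbf{w}^2$ as $\mathbf{p}'\mathbf{w}^2$ for a word $\mathbf{p}'$ of $\mathbf{p}$.
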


\begin{proof}
By Proposition \ref{proSR},
\[
\Sigma=\{\eqref{SR02}, \eqref{SR03}, \eqref{SR04}\} \cup \{\sigma_{n} \mid n \geq 1\}
\]
is an equational basis for the  ai-semiring $SR_6$, and it contains \( \Omega\).
By Theorem \ref{free02}, to prove that $SR_6$ is nonfinitely based,
it suffices to show that for any $m \geq 1$,
${\bu}^{(m)}$ is ${\bw}$-free for each ${\bw}$ in the set $\{x^3, x^2, x + xy, xyz\}$.

The required freeness follows directly from the basis properties of ${\bu}^{(m)}$,
which can be obtained from its explicit form:
${\bu}^{(m)}$ has no subterms of the form $\bt^2$, $\bt_1+\bt_1\bt_2$, or $\bt_1\bt_2\bt_3$.
\end{proof}

Let $S$ be an ai-semiring. One can construct an ai-semiring $S^0$ by adjoining a new element $0$ to $S$.
The operations on $S^0 = S \cup \{0\}$ are defined by extending those on $S$ as follows:
for every $a \in S \cup \{0\}$,
\[
a + 0 = 0 + a = a,\quad a \cdot 0 = 0 \cdot a = 0,
\]
while the original operations on $S$ remain unchanged.
Thus $S$ is a subsemiring of $S^0$, and $0$ serves as both the additive least element and the multiplicative zero.
In particular, if $S$ is the trivial ai-semiring, then $S^0$ is a two-element distributive lattice; this algebra is denoted by $D_2$.

Next, we prove that the ai-semiring variety $\mathsf{V}(S_c(ab), D_2)$ generated by $S_c(ab)$ and $D_2$ is nonfinitely based.
To this end, we first recall a description of the identities of $D_2$, which follows directly from \cite[Lemma 1.1]{sr}.

\begin{lem}\label{lemd2}
Let $\bq\preceq\bu$ be a nontrivial inequality such that
$\bu=\bu_1+\bu_2+\cdots+\bu_n$ and $\bu_i, \bq \in X^+$ for all $1\leq i \leq n$. Then
$\bq\preceq\bu$ holds in $D_2$ if and only if $c(\bu_i) \subseteq c(\bq)$ for some $\bu_i \in \bu$.
\end{lem}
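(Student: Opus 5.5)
Lemma~\ref{lemd2} concerns $D_2=\{0,1\}$ with $0$ the additive least element and multiplicative zero, and $1$ the top element. So $D_2$ is the two-element distributive lattice with $+=\vee$ and $\cdot=\wedge$. I will prove both directions directly by evaluating words under assignments into $\{0,1\}$.

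The first step is to record how a word evaluates. Multiplication is meet, so for any homomorphism $\varphi\colon P_f(X_c^+)\to D_2$ and any word $\bw$, we have $\varphi(\bw)=1$ exactly when $\varphi(x)=1$ for every $x\in c(\bw)$. Addition is join, so $\varphi(\bu)=1$ exactly when $\varphi(\bu_i)=1$ for some $i$. Since the order on $D_2$ is $0<1$, the inequality $\bq\preceq\bu$ holds in $D_2$ if and only if, for every $\varphi$, $\varphi(\bq)=1$ implies $\varphi(\bu)=1$.

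For sufficiency, suppose $c(\bu_i)\subseteq c(\bq)$ for some $i$. If $\varphi(\bq)=1$, then every variable of $c(\bq)$ is sent to $1$. In particular every variable of $c(\bu_i)$ is sent to $1$, so $\varphi(\bu_i)=1$ and hence $\varphi(\bu)=1$.

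For necessity, argue by contraposition. Suppose $c(\bu_i)\not\subseteq c(\bq)$ for every $i$. Define $\varphi(x)=1$ for $x\in c(\bq)$ and $\varphi(x)=0$ otherwise. Then $\varphi(\bq)=1$. Each $\bu_i$ contains a variable outside $c(\bq)$, so $\varphi(\bu_i)=0$ for every $i$, and therefore $\varphi(\bu)=0<1=\varphi(\bq)$. Thus $\bq\preceq\bu$ fails in $D_2$.

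No step is a real obstacle. The argument uses only the lattice structure of $D_2$, so it applies verbatim to words in $X^+$, not just in $X_c^+$. Nontriviality of the inequality plays no role in this argument.
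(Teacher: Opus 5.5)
Your proof is correct. The paper gives no argument of its own here; it only notes that the lemma follows directly from \cite[Lemma 1.1]{sr}, the known description of the identities of $D_2$, specialized to inequalities $\bq\preceq\bu$.

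You instead verify the statement from scratch. You identify $D_2$ as the two-element lattice with $+$ as join and $\cdot$ as meet, so a word evaluates to the top element exactly when every variable in its content does. You then use the characteristic assignment of $c(\bq)$ as the separating homomorphism for the necessity direction. This is the natural direct proof of the cited fact, restricted to the case actually needed.

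What your route buys is self-containment. It also makes your two side remarks transparent: neither commutativity of words nor nontriviality of the inequality plays any role. The paper's citation buys brevity and places the lemma within the general description of the equational theory of $D_2$.
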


\begin{pro}\label{proScabD2}
$\mathsf{V}(S_c(ab), D_2)$ is the commutative ai-semiring variety defined by the identities
\eqref{SR02}, \eqref{SR03}, $\sigma_n$ for all $n \geq 1$, and
\begin{align}
&x_1x_5 \preceq x_1+x_2x_3x_4.\label{ScabD201}
\end{align}
\end{pro}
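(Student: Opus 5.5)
The plan is to prove the two inclusions separately. \emph{Soundness}: each of \eqref{SR02}, \eqref{SR03}, $\sigma_n$ and \eqref{ScabD201} holds in both generators.
\begin{itemize}
\item In $S_c(ab)$, the first three hold by Lemma~\ref{lem26022201}, since $S_c(ab)$ embeds into $SR_6$. Inequality \eqref{ScabD201} holds because every product of three elements of $S_c(ab)$ is $0$, the additive top.
\item In $D_2$ we use Lemma~\ref{lemd2}. $D_2$ is a distributive lattice, so $x^2=x=x+xy$ and $x^3 = x^2$. For $\sigma_n$ we have $c(x_1x_2)\subseteq c(\bq^{(n)})$, and for \eqref{ScabD201} we have $c(x_1)\subseteq c(x_1x_5)$.
\end{itemize}

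\emph{Completeness}. Let $\bq\preceq\bu$ be a nontrivial inequality holding in $S_c(ab)$ and $D_2$. By Lemma~\ref{lem01} one of (i)--(iv) holds. By Lemma~\ref{lemd2} there is $\bu_j\in\bu$ with $c(\bu_j)\subseteq c(\bq)$. Let $\Sigma$ denote the four displayed identities. First I record a few consequences of $\Sigma$.
\begin{itemize}
\item From \eqref{SR03}, $x\preceq x^2$ and $xy\preceq x^2$. Multiplying the first by a word gives $xv\preceq x^2v$, and \eqref{SR02} gives $x^2v\approx x^kv$ for $k\ge 2$. Hence, for words with equal content, exponents may be lowered on the $\preceq$ side. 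So if $\bp$ is a word with $c(\bp)= c(\bq)$ and every exponent of $\bp$ is at least the corresponding exponent of $\bq$, then $\bq\preceq \bp$ follows from $\Sigma$.
\item Substituting into \eqref{ScabD201}, with $x_1\mapsto\bu_j$, $x_5\mapsto$ a suitable word, and $x_2,x_3,x_4$ mapped to factors of a word $\bw$ of length $\ge3$, yields $\bu_j\br\preceq\bu_j+\bw$ for arbitrary words $\br$.
\end{itemize}
Choose $\br$ so that $\bu_j\br$ has content $c(\bq)$ and exponents at least those of $\bq$; this is possible since $c(\bu_j)\subseteq c(\bq)$. Then the two consequences combine into
\[
\bq\preceq \bu_j+\bw\quad\text{for every word }\bw\text{ of length}\ge 3.
\]

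Now the cases.
\begin{itemize}
\item \textbf{Case (i).} Take $\bw=\bu_i$ with $\ell(\bu_i)\ge3$; then $\bu\succeq\bu_j+\bu_i\succeq\bq$.
\item \textbf{Case (ii).} Pick $x\in c(L_1(\bu))\cap c(L_2(\bu))$ and $xy\in L_2(\bu)$. Then $\bu\succeq \bu_j+x+xy\approx\bu_j+x^2\approx\bu_j+x^3$, and we apply the displayed inequality with $\bw=x^3$.
\item \textbf{Case (iii).} For an odd cycle $x_1x_2,\dots,x_{2k+1}x_1$ in $L_2(\bu)$, $\sigma_k$ gives $\bu\succeq\bu_j+x_1\cdots x_{2k+1}$, a word of length $\ge3$. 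For a loop $x^2$, use $x^2\approx x^3$ instead.
\item \textbf{Case (iv).} Assume (i)--(iii) fail. Then $\bq=xy$ with $x\ne y$ joined by an odd path, so $x,y\in c(L_2(\bu))$, and $\bu_j$ has content inside $\{x,y\}$.
 \begin{itemize}
 \item A length-$1$ $\bu_j$ would lie in $c(L_1(\bu))\cap c(L_2(\bu))$, contradicting the failure of (ii).
 \item $\bu_j\in\{x^2,y^2\}$ would give a loop, hence (iii).
 \item Length $\ge3$ would be (i).
 \item So $\bu_j=xy=\bq$, and the inequality is trivial.
 \end{itemize}
\end{itemize}

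I expect the main obstacle to be the bookkeeping in the first step: deriving $\bq\preceq\bu_j+\bw$ from \eqref{ScabD201} when $\bq$ and $\bu_j$ have different multiplicities. In particular, the exponent-lowering via $x\preceq x^2$ and \eqref{SR02} has to be applied inside products correctly, and the substitution for $x_2,x_3,x_4$ must split $\bw$ into three nonempty factors. Everything else is a direct reduction to Lemmas~\ref{lem01} and~\ref{lemd2}.
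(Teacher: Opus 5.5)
Your proof is correct and follows essentially the same route as the paper. It uses the same case split from Lemmas~\ref{lem01} and~\ref{lemd2} and the same key step $\bq\preceq\bu_j\bq\preceq\bu_j+\bw$ (for a word $\bw$ of length at least $3$), obtained from \eqref{ScabD201}, \eqref{SR02} and \eqref{SR03}, and Case (iv) collapses to $\bu_j=\bq$ exactly as in the paper. Your version is in fact slightly more careful: you treat loops in Case (iii) explicitly and route Case (ii) through $\bu_j+x^3$, whereas the paper's back-reference to Proposition~\ref{pro26012310} relies on \eqref{SR04}, which is not among the defining identities here.
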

\begin{proof}
We know that $S_c(ab)$ satisfies the identities \eqref{SR02}, \eqref{SR03}, and $\sigma_n$ for all $n \geq 1$.
It is straightforward to verify that $S_c(ab)$ also satisfies inequality~\eqref{ScabD201}.
By Lemma~\ref{lemd2}, the identities \eqref{SR02}, \eqref{SR03}, \eqref{ScabD201}, and $\sigma_n$ ($n \geq 1$) also hold in $D_2$.

It remains to show that every inequality that holds in both $S_c(ab)$ and $D_{2}$
is derivable from \eqref{SR02}, \eqref{SR03}, \eqref{ScabD201}, and $\sigma_n$ for all $n \geq 1$.
Let $\bq\preceq \bu$ be such an inequality, where
$\bu=\bu_1+\bu_2+\cdots+\bu_n$ and $\bu_i, \bq \in X_c^+$, $1 \leq i \leq n$.
Lemma \ref{lemd2} implies that there exists $\bu_i\in\bu$ such that $c(\bu_i)\subseteq c(\bq)$.
By Lemma \ref{lem01}, it suffices to consider the following four cases.

\textbf{Case 1.} $\ell(\bu_j)\geq 3$ for some $\bu_j\in \bu$. Then
\[
\bu \succeq \bu_j+\bu_i \stackrel{\eqref{ScabD201}}\succeq \bu_i\bq \stackrel{\eqref{SR02}, \eqref{SR03}}\succeq \bq,
\]
which yields $\bu \succeq \bq$.

\textbf{Case 2.} $c(L_1(\bu))\cap c(L_2(\bu))\neq\emptyset$.
The proof is the same as that of Case 2 in Proposition~\ref{pro26012310}.

\textbf{Case 3.} The graph $\mathbb{G}_{\bu}$ contains an odd cycle.
Following the argument used in Proposition~\ref{proSR},
one obtains the inequality $\mathbf{u} \succeq x_1x_2 \cdots x_{2k+1}$.
The rest of the proof proceeds as in Case 1.

\textbf{Case 4.} $\bq\in {\mathbb G}_{\bu}^{odd}$.
Then $\bq=xy$ for some $x, y\in X$, and there exists an odd path in ${\mathbb G}_{\bu}$ between $x$ and $y$.
This implies that $x, y\in c(L_2(\bu))$, and so $c(\bu_i)\subseteq c(L_2(\bu))$.
Since Cases 1--3 are already excluded,
we may also assume that $\ell(\bu_k) \leq 2$ for all $\bu_k\in \bu$, $\bu_k$ is linear for all $\bu_k \in L_2(\bu)$,
and $c(L_1(\bu))\cap c(L_2(\bu))=\emptyset$.
Then $\bu_i\in L_2(\bu)$ and so $\bu_i = \bq$.
Thus $\bq\preceq \bu$ is trivial.
\end{proof}

\begin{cor}
The ai-semiring variety $\mathsf{V}(S_c(ab), D_2)$ is nonfinitely based.
\end{cor}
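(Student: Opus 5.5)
The argument mirrors that of Corollary~\ref{LTN}. By Proposition~\ref{proScabD2}, the set
\[
\Sigma=\{\eqref{SR02}, \eqref{SR03}, \eqref{ScabD201}\}\cup\{\sigma_n \mid n\geq 1\}
\]
is an equational basis for $\mathsf{V}(S_c(ab), D_2)$, and it contains all of $\Omega$, hence certainly an infinite subset of $\Omega$. Theorem~\ref{free02} therefore reduces the claim to a freeness check. For every $m\geq 1$ and every inequality $\bs\preceq\bt$ in $\Sigma\setminus\Omega$, we must show that $\bu^{(m)}$ is $\bt$-free. Written as inequalities, the identities \eqref{SR02} and \eqref{SR03} give right-hand sides drawn from $\{x^3, x^2, x+xy\}$ (both directions). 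The inequality \eqref{ScabD201} gives the right-hand side $x_1+x_2x_3x_4$.

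The first two families are handled exactly as in Corollary~\ref{LTN}. Every word of $\bu^{(m)}$ is a linear word of length $2$, and every variable occurs in exactly two words. A subterm $\bp\varphi(x^2)$ or $\bp\varphi(x^3)$ would contain a word $\bp\bw^2$ with $\bw$ a word, which is impossible since all words of $\bu^{(m)}$ are linear. A subterm $\bp\varphi(x+xy)$ would contain a word $\bp\bw$ together with a word $\bp\bw\bw'$ of strictly greater length, which is impossible since all words have length $2$. Since $\bu^{(m)}$ is $x^2$-free and $x^2$ is a subterm of $x^3$, Lemma~\ref{nlemma0} also settles $x^3$.

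The new case is $\bt=x_1+x_2x_3x_4$. By Lemma~\ref{nlemma0}, it suffices to note that $x_2x_3x_4$ is a subterm of $\bt$ (take $\bp_2=x_1$) and that $\bu^{(m)}$ is $x_2x_3x_4$-free. Indeed, any image $\bp\varphi(x_2x_3x_4)$ consists of words of length at least $3$, while $\bu^{(m)}$ has only words of length $2$. This step carries the only genuinely new content, and it is essentially immediate. Once these checks are done, Theorem~\ref{free02} yields that $\mathsf{V}(S_c(ab), D_2)$ is nonfinitely based.
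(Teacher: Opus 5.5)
Your proposal is correct and follows the paper's intended argument, which the paper omits as "similar to Corollary~\ref{LTN}." You apply Theorem~\ref{free02} to the basis of Proposition~\ref{proScabD2}, reuse the freeness checks for the right-hand sides coming from \eqref{SR02} and \eqref{SR03}, and handle the one new right-hand side $x_1+x_2x_3x_4$ through Lemma~\ref{nlemma0} together with the observation that every word of $\mathbf{u}^{(m)}$ has length $2$.
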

\begin{proof}
The proof is similar to that of Corollary~\ref{LTN} and is therefore omitted.
\end{proof}

Finally, we prove that the ai-semiring $S_c(ab)^0$ is nonfinitely based.
For this purpose, we need the following lemma due to Wu et al.~\cite[Proposition 1.5]{wrz},
which relates the equational theories of $S^0$ and $S$.

\begin{lem}\label{lem000}
Let $\bq\preceq \bu$ be an ai-semiring identity such that
$\bu=\bu_1+\bu_2+\cdots+\bu_n$, where $\bu_i, \bq \in X^+_c$, $1\leq i \leq n$.
Then $\bq\preceq \bu$ is satisfied by ${S}^0$ if and only if
$\bq\preceq D_\bq(\bu)$ holds in $S$,
where $D_\bq(\bu)$ denotes the term that is the sum of words $\bu_i$ in $\bu$ with $c(\bu_i)\subseteq c(\bq)$.
\end{lem}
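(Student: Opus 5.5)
We verify both directions directly from the definitions of $S^0$ and of satisfaction of inequalities. Throughout, $0$ is the additive least element and a multiplicative zero of $S^0$. So for a homomorphism $\varphi\colon P_f(X^+_c)\to S^0$, a word $\bw$ is sent to $0$ exactly when some variable in $c(\bw)$ is sent to $0$. Moreover, a sum is sent to the sum of the nonzero images of its summands, or to $0$ if all of them are $0$.

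\textbf{Sufficiency.} Assume $\bq\preceq D_\bq(\bu)$ holds in $S$, and let $\varphi$ be any homomorphism into $S^0$.
\begin{enumerate}[$(\rm a)$]
\item If $\varphi(\bq)=0$, then $\varphi(\bq)\leq\varphi(\bu)$ trivially, since $0$ is the least element.
\item Otherwise every variable of $c(\bq)$ is mapped into $S$. Define $\psi\colon P_f(X^+_c)\to S$ by $\psi(x)=\varphi(x)$ for $x\in c(\bq)$, and $\psi(x)$ equal to an arbitrary fixed element of $S$ for other $x$. Each summand $\bu_i$ of $D_\bq(\bu)$ has $c(\bu_i)\subseteq c(\bq)$, so $\varphi(\bu_i)=\psi(\bu_i)\in S$. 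Hence
\[
\varphi(\bq)=\psi(\bq)\leq \psi(D_\bq(\bu))=\varphi(D_\bq(\bu))\leq \varphi(\bu).
\]
The last inequality holds because $\varphi(D_\bq(\bu))$ is a partial sum of $\varphi(\bu)$ and $+$ is a join.
\end{enumerate}
If $D_\bq(\bu)$ is empty, the hypothesis is vacuous. In that case we must interpret the statement as saying that $\bq\preceq D_\bq(\bu)$ never holds in $S$ unless $S$ is trivial. We handle this separately: if $S$ is trivial, the claim is immediate.

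\textbf{Necessity.} Assume $\bq\preceq\bu$ holds in $S^0$, and let $\psi\colon P_f(X^+_c)\to S$ be arbitrary. Define $\varphi$ into $S^0$ by $\varphi(x)=\psi(x)$ for $x\in c(\bq)$ and $\varphi(x)=0$ otherwise. Then:
\begin{itemize}
\item $\varphi(\bu_i)=0$ for every $\bu_i\notin D_\bq(\bu)$, since such a $\bu_i$ contains a variable outside $c(\bq)$;
\item $\varphi(\bu_i)=\psi(\bu_i)$ for every $\bu_i\in D_\bq(\bu)$;
\item $\varphi(\bq)=\psi(\bq)\in S$.
\end{itemize}
Because $0$ is the additive identity, $\varphi(\bu)=\psi(D_\bq(\bu))$ when $D_\bq(\bu)$ is nonempty. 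Then $\psi(\bq)=\varphi(\bq)\leq\varphi(\bu)=\psi(D_\bq(\bu))$, which is $\bq\preceq D_\bq(\bu)$ in $S$. If $D_\bq(\bu)$ is empty, then $\varphi(\bu)=0$, so $\varphi(\bq)\leq 0$ forces $\varphi(\bq)=0$. This contradicts $\varphi(\bq)\in S$, so this case cannot occur (for nonempty $S$).

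The only delicate point is the case $D_\bq(\bu)=\emptyset$, where the right-hand side is not a term. I would settle it by the convention just described: the inequality then fails in $S^0$, and correspondingly is regarded as failing in $S$. Everything else is a routine transfer of valuations between $S$ and $S^0$.
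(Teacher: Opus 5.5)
The paper gives no proof of this lemma; it imports it from Wu et al.\ \cite[Proposition 1.5]{wrz}. Your direct verification is correct and is the natural proof. For sufficiency, you restrict a valuation into $S^0$ that does not send $\bq$ to $0$ to a valuation into $S$. For necessity, you extend a valuation into $S$ by sending every variable outside $c(\bq)$ to $0$. The facts you rely on are exactly the ones needed:
\begin{enumerate}[$(\rm a)$]
\item a word is sent to $0$ if and only if one of its variables is;
\item $0$ is neutral for $+$;
\item $S$ is a subsemiring of $S^0$, so sums and the order computed in $S$ agree with those computed in $S^0$.
\end{enumerate}

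One sentence in your proposal is wrong and should be deleted. At the end of the sufficiency part you suggest that, when $D_\bq(\bu)$ is empty, $\bq\preceq D_\bq(\bu)$ might be read as holding in a trivial $S$. You then say that the claim is ``immediate'' in that case. Under that reading the lemma would be false. For trivial $S$ we have $S^0\cong D_2$, and $x\preceq y$ fails in $D_2$: send $x$ to the unique element of $S$ and $y$ to $0$. This agrees with Lemma~\ref{lemd2}.

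Your own necessity argument already shows more: whenever $D_\bq(\bu)=\emptyset$, the inequality $\bq\preceq\bu$ fails in $S^0$ for every nonempty $S$, trivial or not. So the only consistent convention is the one you state at the very end: $\bq\preceq D_\bq(\bu)$ with $D_\bq(\bu)$ empty counts as failing in every $S$. With that convention the sufficiency direction is vacuous in the empty case, and no separate treatment of trivial $S$ is needed. As a check, under this convention Lemma~\ref{lem000} for trivial $S$ specializes exactly to Lemma~\ref{lemd2}.
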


\begin{pro}\label{proab0}
$\mathsf{V}(S_c(ab)^0)$
is the commutative ai-semiring variety defined by the identities \eqref{SR02}, $\sigma_n$ for all $n \geq 1$, and
\begin{align}
&x \preceq x^2;\label{ab01}\\
&xyzt \preceq xyz;\label{ab02}\\
&(xy)^2 \preceq x+xy.\label{ab03}
\end{align}
\end{pro}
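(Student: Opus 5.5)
The plan is to verify soundness first and then prove completeness, using Lemma~\ref{lem000} to transfer everything to $S_c(ab)$, where Lemma~\ref{lem01} applies.

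\textbf{Soundness.} By Lemma~\ref{lem000}, an inequality $\bq\preceq\bu$ holds in $S_c(ab)^0$ if and only if $\bq\preceq D_\bq(\bu)$ holds in $S_c(ab)$. I check each identity this way.
\begin{itemize}
\item For \eqref{SR02} and $\sigma_n$, the content of every summand of the right-hand side lies in $c(\bq)$. So $D_\bq(\bu)=\bu$, and these already hold in $S_c(ab)$ by Lemma~\ref{lem26022201}.
\item For \eqref{ab01}, $D_x(x^2)=x^2$, and $x^2$ always evaluates to $0$ in $S_c(ab)$.
\item For \eqref{ab02}, $D=xyz$, which evaluates to $0$ by \eqref{SR04}.
\item For \eqref{ab03}, $D=x+xy$, which evaluates to $0$ by condition (ii) of Lemma~\ref{lem01}.
\end{itemize}

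\textbf{Two derived rules.} Let $\Sigma$ denote the proposed set of identities. I will use the following consequences of $\Sigma$.
\begin{enumerate}[(a)]
\item Substituting a word $\br$ into \eqref{ab01} gives $\br^2\succeq\br$ for every word $\br$.
\item Multiplying by a common factor and using \eqref{SR02} lets any exponent $\geq 2$ be replaced by $2$. Hence, if $\bp$ is a word with $c(\bp)\subseteq c(\bq)$, then
\[
\bp\bq^2\approx\Big(\prod_{z\in c(\bq)}z\Big)^2\approx\bq^2 .
\]
\end{enumerate}
Now let $\bw$ be any word of length at least $3$ with $c(\bw)\subseteq c(\bq)$. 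Applying \eqref{ab02} repeatedly gives $\bw\succeq\bw\bq^2$. Rule (b) turns this into $\bw\succeq\bq^2$, and rule (a) gives $\bq^2\succeq\bq$. Therefore $\bw\succeq\bq$ is derivable from $\Sigma$. This is the key derived fact.

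\textbf{Completeness.} Take a nontrivial inequality $\bq\preceq\bu$ holding in $S_c(ab)^0$, and put $\bw=D_\bq(\bu)$. This term is nonempty: otherwise the homomorphism sending the variables of $\bq$ to $ab$ and all other variables to $0$ violates the inequality. Since $\bu\succeq\bw$ trivially, it suffices to derive $\bw\succeq\bq$. By Lemma~\ref{lem000}, $\bq\preceq\bw$ holds in $S_c(ab)$, so Lemma~\ref{lem01} gives four cases. Throughout, every word of $\bw$ has content inside $c(\bq)$.
\begin{enumerate}[(i)]
\item Some word of $\bw$ has length at least $3$. The key derived fact applies directly.
\item Some $x\in c(L_1(\bw))\cap c(L_2(\bw))$. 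Then $xy\in\bw$ for some $y$, and
\[
\bw\succeq x+xy\stackrel{\eqref{ab03}}{\succeq}(xy)^2 .
\]
Since $(xy)^2$ is a word of length $4$ with content in $c(\bq)$, the key derived fact finishes this case.
\item $\mathbb G_\bw$ has an odd cycle. If the cycle has length $2k+1\geq 3$, then $\sigma_k$ yields a word of length at least $3$ below $\bw$, and case (i) applies. If it is a loop $x^2$, rewrite it as $x^3$ using \eqref{SR02}, and case (i) applies again.
\item $\bq=xy$ with an odd path from $x$ to $y$ in $\mathbb G_\bw$. All vertices of this path lie in $c(\bw)\subseteq\{x,y\}$. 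A path of length $1$ means $xy\in\bw$, so the inequality is trivial. A longer odd path inside $\{x,y\}$ must use a loop, which brings us back to case (iii).
\end{enumerate}

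The main obstacle is the key derived fact: showing that $\Sigma$ lets any word of length at least $3$ with content in $c(\bq)$ dominate $\bq$. I resolve it through the chain $\bw\succeq\bw\bq^2\approx\bq^2\succeq\bq$. The delicate part is justifying the exponent-collapsing equality in rule (b) using only \eqref{SR02} under multiplication.
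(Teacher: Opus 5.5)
Your proof is correct and follows the paper's route. You reduce via Lemma~\ref{lem000} to $\bq\preceq D_\bq(\bu)$ in $S_c(ab)$ and split into the four cases of Lemma~\ref{lem01}; long words are handled with \eqref{ab02}, \eqref{SR02}, \eqref{ab01}, where your chain $\bw\succeq\bw\bq^2\approx\bq^2\succeq\bq$ is just a more explicit form of the paper's $\bu_i\succeq\bu_i\bq\succeq\bq$; \eqref{ab03} and $\sigma_k$ reduce Cases 2 and 3 to that; and Case 4 forces $\bq\in\bu$. You are also slightly more careful than the paper in checking $D_\bq(\bu)\neq\emptyset$ and in treating loops in Case 3. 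One small fix in Case 4: an odd walk such as $x,y,x,y$ need not use a loop, but any walk from $x$ to $y\neq x$ inside $\{x,y\}$ must use the edge $xy$, so the inequality is trivial regardless.
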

\begin{proof}
It is straightforward to verify that $S_c(ab)^0$ satisfies the identities \eqref{SR02}, \eqref{ab01}--\eqref{ab03}.
By Lemmas \ref{lem01} and \ref{lem000}, $S_c(ab)^0$ also satisfies the inequalities in $\sigma_n$ for all $n \geq 1$.
It suffices to show that every inequality holding in $S_c(ab)^0$
is derivable from \eqref{SR02}, \eqref{ab01}--\eqref{ab03}, and $\sigma_n$ for all $n \geq 1$.
Consider such an inequality $\bq\preceq \bu$, where
$\bu=\bu_1+\bu_2+\cdots+\bu_n$ and $\bu_i, \bq \in X_c^+$, $1 \leq i \leq n$.
By Lemma~\ref{lem000}, $\bq\preceq D_\bq(\bu)$ holds in $S_c(ab)$.
Combined with Lemma \ref{lem01}, it suffices to consider the following four cases.

\textbf{Case 1.} $\ell(\bu_i)\geq 3$ for some $\bu_i\in D_\bq(\bu)$. Then
\[
\bu \succeq \bu_i \stackrel{\eqref{ab02}}\succeq \bu_i\bq \stackrel{\eqref{SR02}, \eqref{ab01}}\succeq \bq,
\]
which implies $\bu \succeq \bq$.

\textbf{Case 2.} $c(L_1(D_\bq(\bu)))\cap c(L_2(D_\bq(\bu)))\neq\emptyset$.
Take a variable $x$ in this set.
Then $xy\in L_2(D_\bq(\bu))$ for some $y\in X$.
Now we have
\[
\bu \succeq x+xy \stackrel{\eqref{ab03}}\succeq (xy)^2,
\]
and the rest of the proof proceeds as in Case 1.

\textbf{Case 3.} The graph $\mathbb{G}_{D_\bq(\bu)}$ contains an odd cycle.
Following the argument used in the proof of Proposition~\ref{proSR},
one obtains the inequality $\mathbf{u} \succeq x_1x_2\cdots x_{2k+1}$.
The remainder of the proof is analogous to Case 1.

\textbf{Case 4.} $\bq\in {\mathbb G}_{D_\bq(\bu)}^{odd}$.
The proof is the same as that of Case 4 in Proposition~\ref{proScabD2}.
\end{proof}

\begin{cor}\label{cor26030401}
The ai-semiring $S_c(ab)^0$ is nonfinitely based.
\end{cor}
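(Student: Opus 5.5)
We apply Theorem~\ref{free02} to the equational basis supplied by Proposition~\ref{proab0}, namely
\[
\Sigma=\{\eqref{SR02},\eqref{ab01},\eqref{ab02},\eqref{ab03}\}\cup\{\sigma_n\mid n\geq 1\},
\]
which contains all of $\Omega$. Theorem~\ref{free02} requires every identity of $\Sigma\setminus\Omega$ to be written as an inequality $\bs\preceq\bt$. The identity \eqref{SR02} is equivalent to the inequalities $x^3\preceq x^2$ and $x^2\preceq x^3$, so its right-hand sides are $x^2$ and $x^3$. The right-hand sides of \eqref{ab01}, \eqref{ab02} and \eqref{ab03} are $x^2$, $xyz$ and $x+xy$, respectively. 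It therefore suffices to show that, for every $m\geq1$, the term $\bu^{(m)}$ is $\bt$-free for each $\bt\in\{x^2,x^3,xyz,x+xy\}$. By Lemma~\ref{nlemma0}, $x^3$ can be dropped once $x^2$ is handled, since $x^2$ is a subterm of $x^3$ via $x^3=x\cdot x^2$.

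For each remaining $\bt$ we argue directly from the shape of $\bu^{(m)}$. It is a sum of distinct linear words of length $2$, and, as noted in the proof of Proposition~\ref{profree}, no variable lies in three distinct of its words. Suppose $\bp\varphi(\bt)+\br=\bu^{(m)}$.

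If $\bt=x^2$ or $\bt=xyz$, pick any word $\bw\in\varphi(x)$. Then $\bp\,\bw^2$ or $\bp\,\bw\bw'\bw''$ (with $\bw'\in\varphi(y)$, $\bw''\in\varphi(z)$) is a word of $\bu^{(m)}$. The first is nonlinear, contradicting linearity, and the second has length at least $3$, contradicting the fact that all words have length $2$.

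If $\bt=x+xy$, then $\bp\varphi(x)$ and $\bp\varphi(x)\varphi(y)$ are both contained in $\bu^{(m)}$. Taking words $\bw\in\varphi(x)$ and $\bw'\in\varphi(y)$, the words $\bp\bw$ and $\bp\bw\bw'$ both lie in $\bu^{(m)}$ and hence both have length $2$. But the second is strictly longer than the first, which is a contradiction.

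The only point needing care is the convention that $\bp$ may be the empty word; the length arguments above hold in that case as well. The hypotheses of Theorem~\ref{free02} are therefore satisfied, and $S_c(ab)^0$ is nonfinitely based.
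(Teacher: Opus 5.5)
Your proposal is correct and follows the paper's omitted argument: apply Theorem~\ref{free02} to the basis of Proposition~\ref{proab0}, then check that $\bu^{(m)}$ is free for the right-hand sides $x^3, x^2, xyz, x+xy$ (the same set as in Corollary~\ref{LTN}), using that $\bu^{(m)}$ is a sum of linear words of length $2$. The only cosmetic point is that $\bp$ is in general a term rather than a word, so in your length and linearity arguments you should first pick a word of $\bp$; this changes nothing.
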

\begin{proof}
This follows by an argument analogous to that of Corollary~\ref{LTN}; thus we omit the details.
\end{proof}

\begin{remark}
The nonfinite basis property of $S_c(ab)^0$ has also been established by Zhao and Wu~\cite{zw},
using a completely different method.
Moreover, by Propositions~\ref{proSR}, \ref{proScabD2} and \ref{proab0}, we have the following strict inclusions:
\[
\mathsf{V}(SR_6) \subset \mathsf{V}(S_c(ab), D_2) \subset \mathsf{V}(S_c(ab)^0).
\]
\end{remark}

\section{The subvariety lattice of the variety $\mathsf{V}(SR_6)$}
In this section, we describe the structure of the subvariety lattice of the variety $\mathsf{V}(SR_6)$.
As a consequence, we show that $\mathsf{V}(SR_6)$ is a limit variety.
The first step is to determine the minimal nontrivial subvarieties of $\mathsf{V}(SR_6)$.

Observe that the two-element commutative ai-semiring $S_c(a)$,
which is denoted by $T_2$ in \cite{sr}, is a subalgebra of $S_c(ab)$.
Hence $\mathsf{V}(S_c(a))$ is a subvariety of $\mathsf{V}(SR_6)$.
In fact, we have the following result.

\begin{pro}\label{T}
 $\mathsf{V}(S_c(a))$ is the only minimal nontrivial subvariety of $\mathsf{V}(SR_6)$.
\end{pro}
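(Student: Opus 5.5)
We must show two things: that $\mathsf{V}(S_c(a))$ is minimal, and that every nontrivial subvariety $\mathcal{W}$ of $\mathsf{V}(SR_6)$ contains $S_c(a)$.

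\textbf{Minimality.} $S_c(a)=\{a,0\}$ has $a+0=0$, $a^2=0$ and $0$ absorbing for both operations. Every nontrivial algebra in $\mathsf{V}(S_c(a))$ contains a copy of $S_c(a)$: pick $b$ not equal to the element given by $x^2$ (constant by \eqref{SR02}, \eqref{SR03}). Alternatively, one checks directly that $S_c(a)$ satisfies no nontrivial inequality beyond the consequences of the basis, so it is not contained in the trivial variety. Since $\mathsf{V}(S_c(a))$ is generated by a two-element algebra and every nontrivial member contains it, as shown below, minimality follows.

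\textbf{Every nontrivial subvariety contains $S_c(a)$.} Let $\mathcal{W}\subseteq\mathsf{V}(SR_6)$ be nontrivial, and take a nontrivial $B\in\mathcal{W}$. By Proposition~\ref{proSR}, $B$ satisfies \eqref{SR02}, \eqref{SR03} and \eqref{SR04}.

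First, $x^2\approx y^2$ holds in $\mathsf{V}(SR_6)$, because \eqref{SR04} gives $y\preceq x^3\approx x^2$. Hence $x^2\succeq y^2$ by symmetry, and so $x^2$ is constant. Call this constant $0_B$. It satisfies $0_B=x^2=x+xy\succeq x$ for all $x$, so $0_B$ is the additive top. It is also multiplicatively absorbing: $x\cdot 0_B=xy^2$, which is a product of three factors and hence $\succeq$ everything by \eqref{SR04}, so $xy^2=0_B$.

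Since $B$ is nontrivial, choose $b\neq 0_B$. Then $\{b,0_B\}$ is closed under both operations: $b+0_B=0_B$, $b^2=0_B$, and $b\,0_B=0_B$. This subalgebra is isomorphic to $S_c(a)$, so $S_c(a)\in\mathcal{W}$ and $\mathsf{V}(S_c(a))\subseteq\mathcal{W}$.

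\textbf{Conclusion.} $\mathsf{V}(S_c(a))$ is itself nontrivial, and it is contained in every nontrivial subvariety. Therefore it is minimal, and it is the unique minimal nontrivial subvariety.

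The only delicate step is the constancy of $x^2$ and its absorbing property. I expect this to come out cleanly from \eqref{SR04}, since any word of length at least three lies above every element. I would double-check that \eqref{SR03} indeed makes $x^2$ lie above $x$, which it does, since $x^2=x+xy\succeq x$.
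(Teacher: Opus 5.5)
Your proof is correct, but it takes a different route from the paper's.

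\textbf{The paper's route.} The paper argues by citation. It invokes the known classification of the six minimal nontrivial ai-semiring varieties, $\mathsf{V}(L_2)$, $\mathsf{V}(R_2)$, $\mathsf{V}(M_2)$, $\mathsf{V}(D_2)$, $\mathsf{V}(N_2)$ and $\mathsf{V}(S_c(a))$. It notes that $S_c(a)$ embeds in $SR_6$, and that the other five generators violate \eqref{SR04}.

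\textbf{Your route.} You work inside $\mathsf{V}(SR_6)$. From \eqref{SR02} and \eqref{SR04} you get $y\preceq x^2$. Hence $x^2$ is a constant that is the additive top and a multiplicative zero, and $\{b,x^2\}\cong S_c(a)$ for any $b\neq x^2$.

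\textbf{What each buys.} Your argument is self-contained. It needs only the identities of Lemma~\ref{lem26022201}, not the basis of Proposition~\ref{proSR} and not the external classification. It also proves something stronger: every nontrivial member of $\mathsf{V}(SR_6)$ contains $S_c(a)$ as a subalgebra, so every nontrivial subvariety contains $\mathsf{V}(S_c(a))$. That is exactly the form in which Proposition~\ref{T} is used at the start of Section~5 (``$\mathcal{V}$ properly contains $\mathsf{V}(S_c(a))$''). The paper's list-based statement reaches that form only via the standard fact that every nontrivial variety contains a minimal nontrivial subvariety. In return, the paper's argument is shorter and places $SR_6$ within the global picture of minimal ai-semiring varieties.

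\textbf{One blemish.} Your preliminary ``Minimality'' paragraph is muddled. Constancy of $x^2$ does not follow from \eqref{SR02} and \eqref{SR03} alone: $D_2$ satisfies both but not $x^2\approx y^2$, so you need \eqref{SR04}. The ``Alternatively'' sentence adds nothing, since nontriviality of $\mathsf{V}(S_c(a))$ is just $|S_c(a)|=2$. Your conclusion paragraph already does the real work correctly, so you can simply delete that paragraph.
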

\begin{proof}
From \cite[Theorem 1.1]{p} and \cite[Table 1]{sr}, it is known that
$\mathsf{V}(L_2)$, $\mathsf{V}(R_2)$, $\mathsf{V}(M_2)$, $\mathsf{V}(D_2)$,
$\mathsf{V}(N_2)$, and $\mathsf{V}(S_c(a))$ form a complete list of the minimal nontrivial subvarieties of the
variety of all ai-semirings. We know that $S_c(a)$ lies in $\mathsf{V}(SR_6)$,
whereas none of $L_2$, $R_2$, $M_2$, $D_2$ and $N_2$ satisfy \eqref{SR04}.
Consequently, $\mathsf{V}(S_c(a))$ is the only minimal nontrivial subvariety of $\mathsf{V}(SR_6)$.
\end{proof}

\begin{pro}\label{prot1}
$\mathsf{V}(S_c(a))$ is  the subvariety of $\mathsf{V}(SR_6)$ defined by the identity
\begin{align}
x^2\approx xy. \label{t01}
\end{align}
\end{pro}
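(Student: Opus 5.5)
We prove two inclusions. First, $S_c(a)=\{a,0\}$ satisfies $x^2\approx xy$: every product in $S_c(a)$ equals $0$, because $a\cdot a=a^2\notin S_c(a)\setminus\{0\}$. Hence both sides evaluate to $0$ under every homomorphism. Since $S_c(a)$ is a subalgebra of $SR_6$, the variety $\mathsf{V}(S_c(a))$ is contained in the subvariety of $\mathsf{V}(SR_6)$ defined by \eqref{t01}.

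For the reverse inclusion, we show that every inequality $\bq\preceq\bu$ holding in $S_c(a)$ follows from \eqref{t01} together with the basis of Proposition~\ref{proSR}. First we describe the identities of $S_c(a)$. Let $\bu=\bu_1+\cdots+\bu_n$ with $\bu_i,\bq\in X_c^+$. We claim that $\bq\preceq\bu$ holds in $S_c(a)$ if and only if one of the following occurs:
\begin{enumerate}[$(\rm i)$]
\item some $\bu_i$ has length at least $2$;
\item $\bu$ contains two distinct variables;
\item $\bq\in\bu$, i.e.\ the inequality is trivial.
\end{enumerate}
For sufficiency: in cases (i) and (ii), every evaluation of $\bu$ is $0$, which is the top element. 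For necessity, suppose $\bu=x$ is a single variable. Evaluate $x\mapsto a$ and all other variables to $0$. Then $\varphi(\bq)$ must equal $a$, which forces $\bq=x$.

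It remains to derive each nontrivial case.

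\textbf{Case (i).} Some word $\bu_i$ has length at least $2$. Repeated use of \eqref{t01} gives that every word of length at least $2$ equals $x^2$ for an arbitrary variable $x$: apply $xy\approx x^2\approx xz$ and use $x^3\approx x^2$ from \eqref{SR02}. In particular, $\bu_i\approx z^2$ for a fresh $z$, and $z^2\approx z^3$. Then \eqref{SR04} gives $z^3\succeq\bq$. So $\bu\succeq\bq$.

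\textbf{Case (ii).} $\bu$ contains distinct variables $x,y$ (words of length $1$). Take $\bu\succeq x+y$. From \eqref{SR03}, $x+xy\approx x^2$, so $x\preceq x^2$. Hence
\[
x+y\succeq x \succeq\ \text{(not directly useful)};
\]
instead we argue as follows. By \eqref{SR03} we have $x+xy\approx x^2$ and $y+yx\approx y^2$. By \eqref{t01}, $x^2\approx xy\approx y^2$. Thus $x+y+xy\approx x^2$. We need $xy\preceq x+y$, and this is the main obstacle.

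The plan for this step is to find a substitution instance of \eqref{SR03} or \eqref{t01} in the two variables $x,y$ that places $xy$ below $x+y$. The first thing to check is whether \eqref{SR03} with $y$ replaced by $x+y$ gives
\[
x^2\approx x+x(x+y)=x+x^2+xy,
\]
which yields only $x\preceq x^2$ and does not produce the needed inequality. The next attempt is to replace $x$ by $x+y$ in \eqref{SR03}:
\[
(x+y)^2\approx x+y+(x+y)z .
\]
By \eqref{t01}, the left side is a sum of words of length $2$, all of which equal $x^2$. The right side is $x+y+xz+yz$, and by \eqref{t01} this equals $x+y+x^2$. Hence $x^2\approx x+y+x^2$, so $x+y\preceq x^2$.

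That is the wrong direction, so it remains to show $x^2\preceq x+y$. The approach is to compare with the subalgebra $\{1,3,5\}$-type structure and check semantically, inside the defined variety, that $x+y\approx x^2$ holds. If such a derivation cannot be found, it would follow that (ii) is not actually sufficient in the variety. In that event, the characterization of the identities of $S_c(a)$ in (ii) must be reexamined directly on $S_c(a)$; for instance, $x+y$ with $x,y\mapsto a$ evaluates to $a$. This suggests that (ii) should be refined and that only (i) and trivial inequalities occur.

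If that refinement holds, Case (i) alone completes the proof. In that case the expected main obstacle is simply ensuring the characterization of the identities of $S_c(a)$ is correct.
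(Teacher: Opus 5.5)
You correctly chose to describe all identities of $S_c(a)$ and derive each one from \eqref{t01} together with the basis of Proposition~\ref{proSR}. As written, though, the proposal rests on a false claim and only conjectures the correct replacement.

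\textbf{The gap.} Condition (ii) is not sufficient. Under $x,y\mapsto a$ the term $x+y$ evaluates to $a$, not to the top element $0$, so for instance $xy\preceq x+y$ fails in $S_c(a)$. Hence your Case (ii) cannot be derived from anything, since it is not even valid in $\mathsf{V}(S_c(a))$. In particular the search for a derivation of $x^2\preceq x+y$ could never succeed. You notice this at the end, but the refined characterization is left conditional (``if that refinement holds''), and your necessity argument only treats $\bu=x$.

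\textbf{The missing step.} Suppose every word of $\bu$ has length $1$. Send every variable of $c(\bu)$ to $a$ and all other variables to $0$. Then $\varphi(\bu)=a$, so $\varphi(\bq)\leq a$ forces $\varphi(\bq)=a$. This means $\bq$ is a single variable lying in $c(\bu)$, so $\bq\in\bu$ and the inequality is trivial. Thus a nontrivial $\bq\preceq\bu$ holds in $S_c(a)$ if and only if $\ell(\bu_i)\geq 2$ for some $\bu_i\in\bu$.

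\textbf{Finishing, and comparison with the paper.} With that established, your Case (i) completes the proof. Write $\bu_i=x\bw$ with $\bw$ a nonempty word. Then \eqref{t01} gives $\bu_i\approx x^2$, and \eqref{SR02} and \eqref{SR04} give $\bu\succeq\bu_i\approx x^2\approx x^3\succeq\bq$; no fresh variable is needed.

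This route genuinely differs from the paper's. The paper imports the known basis $x_1x_2\approx y_1y_2$, $x\preceq x^2$ of $\mathsf{V}(S_c(a))$ from \cite{sr}. It then derives only these two identities: the first via $x_1x_2\approx x_1^2\approx x_1^3\approx y_1^3\approx y_1^2\approx y_1y_2$, the second from \eqref{SR03}. Your version avoids the citation by proving the easy description of the identities of $S_c(a)$ directly. It also shows that \eqref{SR03} and the $\sigma_n$ are not needed for this step.
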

\begin{proof}
From \cite[Table 1]{sr}, $\mathsf{V}(S_c(a))$ is the ai-semiring variety defined by the identities
\[
x_1x_2\approx y_1y_2,\quad x \preceq x^2.
\]
By Proposition~\ref{proSR},
we only need to show that both of these two identities are derivable from the identities \eqref{SR02}, \eqref{SR03}, \eqref{SR04}, \eqref{t01} and $\sigma_n$ for all $n \geq 1$. Indeed, we have
\[
x_1x_2 \stackrel{\eqref{t01}}\approx x_1^2 \stackrel{\eqref{SR02}}\approx x_1^3 \stackrel{\eqref{SR04}}\approx y_1^3\stackrel{\eqref{SR02}}\approx y_1^2 \stackrel{\eqref{t01}}\approx y_1y_2,
\]
which yields $x_1x_2 \approx y_1y_2$. Moreover, $x \preceq x^2$ follows directly from \eqref{SR03}.
\end{proof}

Now let $\mathcal{V}$ be a nontrivial subvariety of $\mathsf{V}(SR_6)$ distinct from $\mathsf{V}(S_c(a))$.
By Proposition~\ref{T}, $\mathcal{V}$ properly contains $\mathsf{V}(S_c(a))$.
It follows from Proposition~\ref{prot1} that $\mathcal{V}$ does not satisfy the identity \eqref{t01}.
Hence there exists a semiring $S$ in $\mathcal{V}$ with elements $a, b \in S$ such that $a^2 \neq ab$,
and so $a\neq b$.
We shall establish the following relations:
\[
ab< a^2,\, a<a^2, \, b<a^2, \, ab\neq a, \, ab\neq b, \, ab\neq a+b, \, a+b\neq a, \, a+b\neq b.
\]

First, from \eqref{SR02} and \eqref{SR04} one derives the identity
\begin{align}
y \preceq x^2. \label{SR06}
\end{align}
Applying \eqref{SR06} we obtain that $ab\leq a^2$, $a\leq a^2$, and $b\leq a^2$.
Since $a^2 \neq ab$, it follows that $ab< a^2$.
If $a=a^2$, then $ab=a^2b\stackrel{\eqref{SR04}}=a^3\stackrel{\eqref{SR02}}=a^2$, a contradiction; hence $a<a^2$.
If $b=a^2$, then $ab=aa^2=a^3\stackrel{\eqref{SR02}}=a^2$, a contradiction; thus $b<a^2$.
If $ab=a$, then $ab=ab^2\stackrel{\eqref{SR04}}=a^3\stackrel{\eqref{SR02}}=a^2$, a contradiction; therefore $ab\neq a$.
A symmetric argument shows that $ab\neq b$.
If $ab=a+b$, then
\[
ab
=ab+ab=ab+a+b\stackrel{\eqref{SR03}}=a^2+b\stackrel{\eqref{SR06}}=a^2,
\]
a contradiction; hence $ab\neq a+b$.
If $a+b=a$, then
\[
ab=(a+b)b=ab+b^2\stackrel{\eqref{SR06}}=b^2\stackrel{\eqref{SR06}}=a^2,
\]
a contradiction; thus $a+b\neq a$.
Similarly, $a+b\neq b$.

Let $\langle a, b \rangle$ denote the subalgebra of $S$ generated by $a$ and $b$.
From the relations established above, together with the identities \eqref{SR02}, \eqref{SR03}, \eqref{SR04}, and \eqref{SR06},
it follows that $\langle a, b \rangle$ consists precisely of the elements
\[
a^2,\; ab,\; a+b,\; a,\; b,
\]
with the possible exception that $a^2$ and $a+b$ may coincide; all other pairs among these five are distinct.
Set $I = \{a^2, a+b\}$.
One readily verifies that $I$ is both a multiplicative ideal and an additive order filter of $\langle a, b \rangle$.
The quotient algebra $\langle a, b \rangle / I$ is easily seen to be isomorphic to $S_c(ab)$.
Consequently, $S_c(ab)$ lies in $\mathsf{V}(S)$, and therefore $\mathsf{V}(S_c(ab))$ is a subvariety of $\mathcal{V}$.

If $\mathcal{V}\neq \mathsf{V}(S_c(ab))$, then by Corollary~\ref{cor26022310},
$\mathcal{V}$ does not satisfy the identity \eqref{26022301}.
Hence there exists a semiring $S \in \mathcal{V}$ with elements $a, b, c, d \in S$ such that
\[
ad \nleq ab + bc + cd.
\]
Our aim is to show that $\mathsf{V}(S)$ contains $SR_6$, which then implies $\mathcal{V}=\mathsf{V}(SR_6)$.

Let $\langle a, b, c, d \rangle$ denote the subalgebra of $S$ generated by $a, b, c, d$.
Then by the identities \eqref{SR02}, \eqref{SR03}, \eqref{SR04}, \eqref{SR06} and $\sigma_n$ for all $n \geq 1$,
every element of $\langle a, b, c, d \rangle$ is a finite sum of elements from the set
\[
\{a,\; b,\; c,\; d,\; ab,\; ac,\; ad,\; bc,\; bd,\; cd,\; a^2\}.
\]
In particular, $a^2$ acts as both the additive top element and the multiplicative zero.
One can verify that $\langle a, b, c, d \rangle$ contains at most $85$ elements.
Consider the following subsets of $\langle a, b, c, d \rangle$:
\[
\begin{aligned}
R_2 &= \{a,\; a+c\}, \\
R_3 &= \{ab,\; bc,\; cd,\; ab+bc,\; ab+cd,\; bc+cd,\; ab+bc+cd\}, \\
R_4 &= \{d,\; b+d\}, \\
R_5 &= \{c\}, \\
R_6 &= \{b\}.
\end{aligned}
\]
Set $R_1 = \langle a, b, c, d \rangle \setminus (R_2 \cup R_3 \cup R_4 \cup R_5 \cup R_6)$.
A direct verification (which is straightforward but somewhat lengthy) confirms that
the subsets $R_1,\dots,R_6$ are pairwise disjoint and compatible with addition and multiplication;
we omit it for brevity.

Let $\rho$ be the equivalence relation on $\langle a, b, c, d \rangle$ whose equivalence classes are precisely $R_1, \dots, R_6$.
Consequently, $\rho$ is a congruence on $\langle a, b, c, d \rangle$.
Finally, one can show that the quotient algebra
$\langle a, b, c, d \rangle / \rho = \{R_i\mid 1\leq i \leq 6\}$ is isomorphic to $SR_6$, where the isomorphism sends each congruence class $R_i$ to the element $i \in SR_6$, $1 \leq i \leq 6$.
Thus $\mathsf{V}(S)$ contains $SR_6$, and so $\mathcal{V}=\mathsf{V}(SR_6)$.

By the above arguments, we arrive at the following result, which describes the subvariety lattice of $\mathsf{V}(SR_6)$.

\begin{pro}\label{coro}
The subvariety lattice of $\mathsf{V}(SR_6)$ is a four-element chain (see Figure~$\ref{figure1}$),
where $\mathbf{T}$ denotes the trivial variety.

\begin{figure}[ht]
\centering
\setlength{\unitlength}{1.2cm}
\begin{picture}(2.5,3.2)
\thicklines
\put(1.5,0.3){\line(0,1){2.7}}
\put(1.5,0.3){\circle*{0.15}}
\put(1.5,1.2){\circle*{0.15}}
\put(1.5,2.1){\circle*{0.15}}
\put(1.5,3.0){\circle*{0.15}}
\put(1.8,0.25){\makebox[0pt][l]{$\mathbf{T}$}}
\put(1.8,1.15){\makebox[0pt][l]{$\mathsf{V}(S_c(a))$}}
\put(1.8,2.05){\makebox[0pt][l]{$\mathsf{V}(S_c(ab))$}}
\put(1.8,2.95){\makebox[0pt][l]{$\mathsf{V}(SR_6)$}}
\end{picture}
\caption{The subvariety lattice of $\mathsf{V}(SR_6)$}
\label{figure1}
\end{figure}
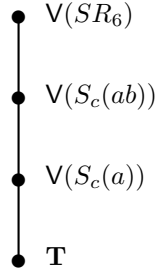
\end{pro}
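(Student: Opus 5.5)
The plan is to assemble the chain from the facts established just before the statement. First, the four varieties are pairwise distinct and nested:
\[
\mathbf{T}\subset \mathsf{V}(S_c(a))\subset \mathsf{V}(S_c(ab))\subset \mathsf{V}(SR_6).
\]
The inclusions come from the embeddings $S_c(a)\le S_c(ab)\le SR_6$, the latter via the subalgebra $\{1,3,5,6\}$. For strictness I will use separating identities.
\begin{itemize}
\item $S_c(a)$ is nontrivial.
\item $S_c(ab)$ fails \eqref{t01}: put $x\mapsto a$, $y\mapsto b$, giving $a^2=0\neq ab$.
\item $SR_6$ fails \eqref{26022301}: this follows from Corollary~\ref{cor26022310} together with the fact that $\mathsf{V}(S_c(ab))$ is finitely based (Proposition~\ref{pro26012310}) while $SR_6$ is nonfinitely based (Corollary~\ref{LTN}). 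Alternatively, the assignment $x_1\mapsto 2$, $x_2\mapsto 6$, $x_3\mapsto 5$, $x_4\mapsto 4$ gives a direct witness, since $x_1x_4\mapsto 1$ while the right-hand side evaluates to $3$.
\end{itemize}

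Next, every subvariety $\mathcal{V}$ of $\mathsf{V}(SR_6)$ must be one of these four. If $\mathcal{V}$ is nontrivial, it contains a minimal nontrivial subvariety, and by Proposition~\ref{T} that is $\mathsf{V}(S_c(a))$. If $\mathcal{V}\neq\mathsf{V}(S_c(a))$, then by Proposition~\ref{prot1} $\mathcal{V}$ fails \eqref{t01}. The preceding argument then produces $a,b$ with $a^2\neq ab$ and shows that $\langle a,b\rangle/I\cong S_c(ab)$, so $\mathsf{V}(S_c(ab))\subseteq\mathcal{V}$. If moreover $\mathcal{V}\neq \mathsf{V}(S_c(ab))$, then Corollary~\ref{cor26022310} gives an algebra $S\in\mathcal V$ violating \eqref{26022301}. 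The congruence $\rho$ with classes $R_1,\dots,R_6$ then yields $SR_6\in\mathsf{V}(S)\subseteq\mathcal{V}$, hence $\mathcal{V}=\mathsf{V}(SR_6)$.

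Since any two of the four varieties are comparable and there are no others, the lattice is the four-element chain. The one point I expect to need the most care is the appeal to $\rho$ being a congruence with quotient $SR_6$, which the text states with the verification omitted. I will take it as established. If I had to check it, I would reduce every element of $\langle a,b,c,d\rangle$ to a sum of the eleven listed generators, normalized by \eqref{SR02}, \eqref{SR03}, \eqref{SR04} and \eqref{SR06}. I would then verify class compatibility only for generator products and sums, using $ad\nleq ab+bc+cd$ to keep $R_3$ separated from $R_1$.
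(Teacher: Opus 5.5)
Your proposal is correct and is the paper's argument: $\mathsf{V}(S_c(a))$ is the unique atom (Proposition~\ref{T}), then $S_c(ab)\cong\langle a,b\rangle/I$ once \eqref{t01} fails, then $SR_6\cong\langle a,b,c,d\rangle/\rho$ once \eqref{26022301} fails, with the verification for $\rho$ taken on trust exactly as the paper omits it. Your explicit separating witnesses usefully pin down that the four varieties are distinct, which the paper leaves implicit; this includes $x_1\mapsto 2$, $x_2\mapsto 6$, $x_3\mapsto 5$, $x_4\mapsto 4$ refuting \eqref{26022301} in $SR_6$. If you ever carry out the $\rho$ check, note that its real content is well-definedness of the partition in $S$ (for example $a\neq c$ and $ac\neq ab$, each such coincidence forcing $ad\leq ab+bc+cd$), not merely separating $R_3$ from $R_1$.
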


By Proposition~\ref{pro26012310}, Corollary~\ref{LTN}, and Proposition~\ref{coro}, we obtain the main result of this section:

\begin{thm}\label{main}
$\mathsf{V}(SR_6)$ is a limit variety.
\end{thm}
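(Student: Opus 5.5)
The plan is to unpack the definition of a limit variety into its two halves and discharge each one with a result already established in the paper.

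First, $\mathsf{V}(SR_6)$ is nonfinitely based. This is exactly Corollary~\ref{LTN}. That corollary was obtained from Theorem~\ref{free02}, using the countably infinite basis of Proposition~\ref{proSR} together with the freeness of $\mathbf{u}^{(m)}$ with respect to the right-hand sides of \eqref{SR02}--\eqref{SR04}.

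Second, every proper subvariety is finitely based. By Proposition~\ref{coro}, the subvariety lattice of $\mathsf{V}(SR_6)$ is the chain
\[
\mathbf{T}\subset \mathsf{V}(S_c(a))\subset \mathsf{V}(S_c(ab))\subset \mathsf{V}(SR_6),
\]
so the proper subvarieties are exactly $\mathbf{T}$, $\mathsf{V}(S_c(a))$ and $\mathsf{V}(S_c(ab))$. Each of these is finitely based:
\begin{enumerate}[$(\rm i)$]
\item the trivial variety is defined by the single identity $x\approx y$;
\item $\mathsf{V}(S_c(a))$ is defined, relative to commutative ai-semirings, by the two identities $x_1x_2\approx y_1y_2$ and $x\preceq x^2$ quoted in the proof of Proposition~\ref{prot1}. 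Alternatively, it is defined by \eqref{t01} together with any basis of $\mathsf{V}(SR_6)$; but that basis is infinite, so the first description is the one to use;
\item $\mathsf{V}(S_c(ab))$ is finitely based by Proposition~\ref{pro26012310}, with basis \eqref{SR02}, \eqref{SR03}, \eqref{SR04}, \eqref{26022301}.
\end{enumerate}
A small point is that these bases are given relative to commutative ai-semirings. One should therefore add the finitely many axioms of commutative ai-semirings, namely associativity, commutativity and idempotency of $+$, associativity and commutativity of $\cdot$, and distributivity. The bases remain finite.

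The only genuine content lies in Proposition~\ref{coro}, specifically the step that any subvariety not contained in $\mathsf{V}(S_c(ab))$ already contains $SR_6$. That step relies on checking that the partition $R_1,\dots,R_6$ of $\langle a,b,c,d\rangle$ is a congruence whose quotient is $SR_6$. Since we may assume it as proved, the theorem itself follows by simply combining the three cited results, and I expect no real obstacle.
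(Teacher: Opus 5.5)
Your proposal is correct and matches the paper's proof, which obtains Theorem~\ref{main} by combining Corollary~\ref{LTN}, Proposition~\ref{coro} and Proposition~\ref{pro26012310}, leaving the finite bases for $\mathbf{T}$ and $\mathsf{V}(S_c(a))$ implicit. You were right to take the finite basis $x_1x_2\approx y_1y_2$, $x\preceq x^2$ quoted in the proof of Proposition~\ref{prot1}, rather than the relative description by \eqref{t01} inside $\mathsf{V}(SR_6)$; that is exactly how to fill the implicit step.
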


\section{Conclusion}
The explicit limit varieties of ai-semirings constructed in \cite{rjzl} are of two types: one consists of varieties generated by flat extensions of certain groups, and the other is the variety $\mathsf{V}(S_c(abc))$. These varieties are distinct from $\mathsf{V}(SR_6)$, as they do not satisfy the inequality~\eqref{SR04}. The limit variety given in \cite{gr} is not finitely generated. Hence $\mathsf{V}(SR_6)$ provides a new example of a limit variety of ai-semirings.

Prior to this work, the eight-element algebra $S_c(abc)$ was the smallest known example of an ai-semiring generating a limit variety.
We show that the variety $\mathsf{V}(SR_6)$ is a limit variety, thereby establishing the six-element algebra $SR_6$ as the smallest known example of this kind.
This discovery not only provides a new natural example but also reduces the minimal possible size of an ai-semiring generating a limit variety from eight to six.

Results from \cite{jrz, gmrz, sr, zrc} confirm that no ai-semiring of order less than four can generate a limit variety.
Furthermore, recent studies \cite{rlzc, rlyc, yrg, yrzs} suggest that four-element ai-semirings are unlikely to possess this property, although a definitive conclusion remains open. If a smaller example exists, it would most likely be of order five.
However, with $15751$ five-element ai-semirings (see~\cite[Table~3]{emr}),
a comprehensive investigation presents a considerable challenge and offers a rich direction for future research.

The results obtained in this paper contribute to a deeper understanding of the boundary between finitely based and nonfinitely based varieties, and provide new insights into the structure of limit varieties. The question of whether a five-element ai-semiring can generate a limit variety remains an intriguing open problem for further exploration.

\qquad

\noindent
\textbf{Acknowledgements}
The authors would like to thank Zidong Gao, Jun Jiao, Chenyu Yang, and Ting Yu for their helpful discussions and contributions to this work. We also sincerely thank the anonymous referee for the extremely careful reading and for the many insightful and constructive comments and suggestions, which have significantly improved the quality of this paper.
Miaomiao Ren, Corresponding author, is supported by National Natural Science Foundation of China (12371024, 12571020).
Mengya Yue is supported by the Research Innovation Project for Postgraduates of Northwest University (CX2026052).

\bibliographystyle{amsplain}


\end{document}